\definecolor{dark_purple}{rgb}{0.4, 0.0, 0.4}
\definecolor{dark_green}{rgb}{0.0, 0.7, 0.0}
\def\H{{\mathcal H}}
\def\HH{{\underline{\mathcal H}}} 
\def\C{{\mathbb C}}
\def\CC{\underline{\mathbb C}} 
\def\v{\varphi}
\def\o{\omega}
\newcommand{\Ker}[1]{\mathsf{Ker}~ }
\newcommand{\codim}[1]{\mathsf{codim}~ }
\renewcommand{\subset}{\subseteq} 
\newtheorem{theorem}{Theorem}[section]
\newtheorem{proposition}  [theorem]  {Proposition} 
\newtheorem{lemma}        [theorem]  {Lemma} 
\newtheorem{corollary}    [theorem]  {Corollary}
\newtheorem{example}      [theorem]  {Example} 
\newtheorem{remark}		  [theorem]  {Remark} 
\DeclareMathOperator{\image}{Im} 
\DeclareMathOperator{\U}{U} 
\DeclareMathOperator{\di}{dim}
\newcommand{\ii}{\mathrm{i}} 
\newcommand{\eu}{\mathrm{e}} 
\newcommand{\GL}{\mathrm{GL}}
\newcommand{\gl}{\mathfrak{gl}}
\newcommand{\CP}{{\mathbb C}P}
\newcommand{\pa}{\partial}
\newcommand{\zbar}{\bar{z}}
\newcommand{\ov}{\overline}
\newcommand{\la}{\lambda}
\begin{document}

\title[Symmetric shift-invariant subspaces and harmonic maps]
{Symmetric shift-invariant subspaces and harmonic maps}
\author[A. Aleman]{Alexandru Aleman}
\address{Lund University, Mathematics, Faculty of Science, P.O. Box 118, S-221 00 Lund, Sweden}
\email{alexandru.aleman@math.lu.se}
\author[R. Pacheco]{Rui Pacheco}
\address{Centro de Matem\'{a}tica e Aplica\c{c}{\~{o}}es (CMA-UBI), Universidade da Beira Interior, 6201 -- 001
Covilh{\~{a}}, Portugal.}
\email{rpacheco@ubi.pt}
\thanks{The second author was partially supported by Funda\c{c}\~{a}o para a Ci\^{e}ncia e Tecnologia through the project UID/MAT/00212/2019.}
\author[J.C. Wood]{John C. Wood}
\address{School of Mathematics, University of Leeds, LS2 9JT, G.B.}
\email{j.c.wood@leeds.ac.uk}
\thanks{}

\keywords{harmonic maps, Riemann surfaces, shift-invariant subspaces}
\subjclass[2010]{Primary 58E20; Secondary 47B32, 30H15, 53C43}

\maketitle
\begin{abstract} The Grassmannian model represents harmonic maps from Riemann surfaces by families of shift-invariant subspaces of a Hilbert space.  We impose a natural symmetry condition on the shift-invariant subspaces that corresponds to considering an important class of harmonic maps into symmetric and $k$-symmetric spaces. Using an appropriate description of  such symmetric shift-invariant subspaces we obtain new results for the corresponding extended solutions, { including how to obtain primitive harmonic maps from certain harmonic maps into the unitary group}.
\end{abstract}

\section{Summary of results} \label{sec:statement}
We characterize shift-invariant subspaces which are \emph{$k$-symmetric} in terms of certain filtrations (Proposition \ref{shift-inv} and Proposition \ref{symfiltration}). In Theorem \ref{diff-condition}, we give a general form for the corresponding \emph{extended solutions}.  In Theorem \ref{primitivekl} we see how $k$-symmetric extended solutions correspond to \emph{primitive harmonic maps
into a $k$-symmetric space}. The combination of these results shows how to obtain primitive harmonic maps from certain harmonic maps into the unitary group, thus reversing a well-known  \cite[Ch.~21, Sec.~IV]{guest-book} construction (see Remark \ref{reverse}). Finally, in Theorem \ref{holomorphic-potentials}, we see how our correspondences are given in terms of holomorphic potentials.

\section{Introduction and Preliminaries} \label{sec:intro}
Recall that a smooth map $\v$ between two Riemannian manifolds $(M,g)$ and $(N,h)$ is said to be \emph{harmonic} if it is a critical point of the energy functional
$$ E(\v, D)=\frac{1}{2}\int_D|d\v|^2\omega_g$$
for any relatively compact $D$ in $M$, where $\omega_g$ is the volume measure, and $|d\v|^2 $ is the Hilbert--Schmidt  norm of the differential  of $\v$;
this functional being the natural  generalization of the classical Dirichlet integral.

In this paper we continue our study \cite{aleman-pacheco-wood1} of harmonic maps from a Riemann surface $M$ into the group $\U(n)$ of unitary matrices of order $n$ and their relation with shift-invariant subspaces of Hilbert space.  For background, largely aimed at the functional analysis community, see \cite{aleman-pacheco-wood1}; see also
\cite{eells-lemaire, urakawa} for the general theory and \cite{svensson-wood-unitons, wood-60} for some background relevant to this paper.

Recall that K.~Uhlenbeck introduced \cite{uhlenbeck} the notion of an \emph{extended solution}, which is a smooth map $\Phi:S^1\times M\to \U(n)$ satisfying $\Phi(1,\cdot)=I$ and such that, for every local (complex) coordinate $z$ on $M$, there are $\gl(n,\C)$-valued maps $A_z$ and $A_{\bar{z}}$ for which
\begin{equation}\label{extsol}
\Phi(\la,\cdot)^{-1}d\Phi(\la,\cdot)=(1-\la^{-1})A_z d z+(1-\la)A_{\bar{z}}d\bar{z}.
\end{equation}
We can consider $\Phi$ as a map from $M$ into the \emph{loop group} of $\U(n)$ defined by
$\Omega\U(n) = \{\gamma:S^1 \to \U(n) \text{ smooth}: \gamma(1) = I\}$. If $\Phi$ is an extended solution, then
 $\v=\Phi(-1,\cdot)$ is a harmonic map with the  matrix-valued 1-form
 $\tfrac{1}{2}\v^{-1}d\v := A^\v_z d z+A^\v_{\bar{z}}d\bar{z}$ given by  $A^\v_z=A_z$ and $A^\v_{\bar{z}}=A_{\bar{z}}$.
 Conversely,  for a given harmonic map $\v:M\to \U(n)$, an extended solution with the property that
$$
\Phi^{-1}(\la,\cdot)d\Phi(\la,\cdot)=(1-\la^{-1})A^\v_zd z+(1-\la)A^\v_{\bar{z}}d\bar{z}
$$
is said to be \emph{associated} to $\v$, and we have
$$
\Phi(-1,\cdot)=u\v
$$
for some constant $u\in \U(n)$.
If $M$ is simply connected, the existence of extended solutions is equivalent to harmonicity, see \cite{uhlenbeck}; the solution is unique up to multiplication from the left by a constant loop, i.e., a $\U(n)$-valued function on $S^1$, independent of $z\in M$.
Moreover (see \cite[Thm 2.2]{uhlenbeck} and \cite[\S 3.1]{aleman-pacheco-wood1}) the extended solution can be chosen to be a smooth map, or even holomorphic in $\lambda\in \C\setminus\{0\}$ and real analytic in $M$.

We again use the \emph{Grassmannian model} \cite{segal}, which associates to an extended solution $\Phi$ the family of closed subspaces $W(z),~z\in M,$  of the Hilbert space  $L^2(S^1,\C^n)$, defined by
\begin{equation}\label{W-def}
W(z)=\Phi(\cdot,z)\H_+,
\end{equation}
where  $\H_+$ is the usual Hardy space of $\C^n$-valued functions, i.e., the closed subspace of  $L^2(S^1,\C^n)$ consisting of Fourier series whose negative coefficients vanish.
 Note that the subspaces $W(z)$ form the fibres of a smooth bundle $W$ over the Riemann surface (which is, in fact, a \emph{subbundle} of the \emph{trivial bundle}
$\HH := M \times L^2(S^1,\C^n)$ see, for example, \cite[\S 3.1]{aleman-pacheco-wood1}).

We denote by $S$ the forward shift on $L^2(S^1,\C^n)$:
$$
(Sf)(\la)=\la f(\la), \qquad \la\in S^1,$$ and by $\partial_z$ and $\partial_{\bar{z}}$ differentiation with respect to  $z$ and $\bar{z}$ respectively, where $z$ is a local coordinate on $M$; note that all equations below will be independent of the choice of local coordinate.
If $f:S^1\times M\to\C^n$ is differentiable in the second variable and satisfies $f(\cdot,z)\in W(z),~z\in M$, it follows from \eqref{extsol} that
\begin{equation}\label{W-eq}
 S\partial_z f(\cdot,z)\in W(z),  \quad \partial_{\bar{z}}f(\cdot,z)\in W(z),  \end{equation} i.e., in terms of  differentiable sections we have
\begin{equation}\label{propW}
S\partial_zW(z)\subset W(z),\quad \partial_{\bar{z}}W(z)\subset W(z),
\end{equation}
which we shall often abbreviate to $S\partial_zW\subset W$ and
$\partial_{\bar{z}}W\subset W$;
in fact, these equations are equivalent to \eqref{extsol} see \cite{segal, guest-book}.

The Iwasawa decomposition of loop groups  \cite[Theorem (8.1.1)]{pressley-segal} implies that $W(z)=\Phi(\cdot,z)\H_+$, with $\Phi:S^1\times M\to \U(n)$ smooth; given such a $\Phi$, \eqref{W-eq} implies that $\Phi\,\Phi^{-1}\!(1,\cdot)$ is an extended solution.

We continue to explore the connection between harmonic maps which possess extended solutions, and the associated infinite-dimensional family (i.e., bundle) $W = W(z)$ of shift-invariant subspaces
\eqref{W-def}.  By extension we shall call the family $W(z)$ an \emph{extended solution} as well.

In our previous paper \cite{aleman-pacheco-wood1} we studied a new criterion for finiteness of the uniton number; in the present paper we turn our attention to \emph{symmetry}.
Specifically, we impose the following symmetry condition on $W$:
 \begin{equation}\label{symmetrycondition}
\mbox{if $f \in W$ then $f_\omega \in W$, where we set $f_\omega(\lambda)=f(\omega\lambda)$ for $\la \in S^1$;}
\end{equation}
here $\omega = \omega_k$ is the primitive $k$th root of unity for some $k \in \{2,3,\ldots\}$.
A shift-invariant subspace $W$ is said to be \emph{$k$-symmetric} if it satisfies condition \eqref{symmetrycondition} for $\omega=\omega_k$;
$W$ is said to be $S^1$-invariant if it satisfies \eqref{symmetrycondition} for any $\omega \in S^1$.

The $k$-symmetric extended solutions correspond to an important class of harmonic maps into symmetric spaces and a generalization of those, the \emph{primitive harmonic maps} into \emph{$k$-symmetric spaces} \cite{BurstallPedit,guest-book}. In \S \ref{invsubspaces}, we establish a one-to-one correspondence between $k$-symmetric shift-invariant subspaces  and filtrations  $V_0\subseteq V_1 \subseteq\ldots \subseteq V_{k-1}$ of  invariant subspaces   satisfying  $S V_{k-1}\subseteq V_0$. Moreover, we  prove (see Proposition \ref{ksymmetry}) that this correspondence induces a one-to-one correspondence between  $k$-symmetric
extended solutions $W$ and  \emph{$\lambda$-cyclic superhorizontal sequences} of length $k$, that is, sequences   $V_0\subseteq V_1 \subseteq\ldots \subseteq V_{k-1}$ of extended solutions  satisfying the superhorizontality condition  \begin{equation}\label{superhor}
\partial_z V_j \subset V_{j+1} \quad \text{for} \quad j=0,\ldots, k-2,
\end{equation}
and the condition $S\partial_z V_{k-1}\subseteq V_{0}$.
 This leads to Theorem \ref{diff-condition}, where we give a new general form for $k$-symmetric extended solutions.  Theorem \ref{diff-condition} also explains (see Remark \ref{reverse}) under what conditions a well-known method \cite[Ch.~21, Sec.~IV]{guest-book}   of obtaining harmonic maps into $\U(n)$ from primitive harmonic maps can be reversed in order to obtain primitive harmonic maps from certain harmonic maps into $\U(n)$. Finally, in \S \ref{holomorphicpotentials} we describe this construction in terms of holomorphic potentials (Theorem \ref{holomorphic-potentials}), and some examples are given.

 \section{$k$-symmetric shift-invariant subspaces}\label{invsubspaces}

In this section, we describe all $k$-symmetric shift-invariant subspaces which are relevant for this work, for any $k \in \{2,3,\ldots\}$. The description will follow from the general form
 for shift-invariant subspaces \cite{He} and some algebraic manipulations.

As before, $\H_+$ stands for the usual Hardy space of $\C^n$-valued functions, and $S$  for the shift.  As we did before, we sometimes write, by abuse of notation, $\lambda f$ instead of $Sf,~f\in L^2(S^1,\C^n)$.
Recall from \S \ref{sec:intro} that a  \emph{$k$-symmetric} shift-invariant subspace $W$ is one which is invariant with respect to the unitary map
 $\hat{\omega}:L^2(S^1,\C^n)\to L^2(S^1,\C^n)$,  induced by  the primitive $k$th root of unity $\omega$, and defined by    $\hat{\omega}(f)(\lambda)=f_\omega(\lambda)=f(\omega\lambda)$.  The following result gives the spectral theorem for the restriction $\hat{\omega}|W$.

				\begin{proposition}\label{shift-inv}
			Let $W$ be a $k$-symmetric shift-invariant subspace.
\begin{enumerate}			
						\item[(i)] For $0\le j\le k-1$, the subspace
			$$W_j=\{f\in W:~f_\o=\o^jf\}=\{g\in W:~g(\lambda)=\sum_{l=0}^{k-1}\o^{-lj}f(\o^l\lambda),~f\in W\}$$ is  closed and
			\begin{equation}\label{eigendecompsoition}
			W=\bigoplus_{j=0}^{k-1} W_j.\end{equation}
			\item[(ii)] For $0\le j\le k-1$, there exist closed shift-invariant subspaces $V_j$ of $L^2(S^1,\C^n)$ such that $S V_{k-1}\subseteq V_0\subseteq V_1\subseteq \cdots\subseteq V_{k-1} $, and
			\begin{equation}\label{eigenspace}
			W_j=S^j\{g\in W:~g(\lambda)=f(\lambda^k),~f\in V_j\}.
			\end{equation}
			\item[(iii)] If\/  $W=\Phi\H_+$ with $\Phi$ measurable and $\U(n)$-valued  a.e.\ on $S^1$, then  $V_{k-1}=\Psi \H_+$ with $\Psi$ measurable and $\U(n)$-valued a.e.\ on $S^1$. Moreover,  there exist subspaces $\alpha_0\subseteq \alpha_1\subseteq\cdots\subseteq \alpha_{k-2}\subseteq \C^n$ with orthogonal projections $\pi_{\alpha_j},~0\le j\le k-2,$ such that
	$$V_{j}=\Psi(\pi_{\alpha_j}+\lambda\pi_{\alpha_j}^\perp)\H_+=\Psi(\alpha_j+\la\H_+),$$ and  $$W=\Psi(\lambda^k,\cdot)(\alpha_0+\lambda\alpha_1+\cdots\lambda^{k-2}\alpha_{k-2}+ \la^{k-1}\H_+).$$
\end{enumerate}			
		\end{proposition}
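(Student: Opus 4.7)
For (i), the operator $\hat\omega$ restricts to a unitary on $W$ with $\hat\omega^k=I$, so its spectrum lies in $\{\omega^j:0\le j\le k-1\}$. The orthogonal projection onto the $\omega^j$-eigenspace is $P_j=\frac1k\sum_{l=0}^{k-1}\omega^{-lj}\hat\omega^l$, and applying $P_j$ to a general $f\in W$ produces the stated explicit description of $W_j$, establishes closedness, and yields the orthogonal decomposition.

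For (ii), the condition $f(\omega\lambda)=\omega^j f(\lambda)$ forces the Fourier expansion to take the form $f(\lambda)=\lambda^j g(\lambda^k)$ for a unique $g\in L^2(S^1,\C^n)$. I would define $V_j$ as the set of such $g$; closedness follows because $g\mapsto\lambda^j g(\lambda^k)$ is an isometric embedding. For the chain $V_0\subseteq\cdots\subseteq V_{k-1}$: applying $S$ to $\lambda^j g(\lambda^k)\in W_j$ produces $\lambda^{j+1}g(\lambda^k)\in W$, which has $\hat\omega$-eigenvalue $\omega^{j+1}$, hence lies in $W_{j+1}$, so $g\in V_{j+1}$ (when $j\le k-2$). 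For $SV_{k-1}\subseteq V_0$: applying $S$ to $\lambda^{k-1}g(\lambda^k)\in W_{k-1}$ gives $\lambda^k g(\lambda^k)=(Sg)(\lambda^k)$, which has eigenvalue $\omega^k=1$ and lies in $W_0$, so $Sg\in V_0$. Shift-invariance of $V_j$ itself follows similarly using $S^k$, which preserves both $W$ and each $\hat\omega$-eigenspace.

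Part (iii) is the substantive step. First I would show that the $k$-symmetry of $W=\Phi\H_+$ forces $u(\lambda):=\Phi(\lambda)^{-1}\Phi(\omega\lambda)$ to be a constant unitary matrix with $u^k=I$: the relation $\hat\omega W=W$ gives $\Phi(\omega\lambda)\H_+=\Phi(\lambda)\H_+$, so $u\H_+=\H_+$, and a unitary-valued function whose values and inverse both preserve $\H_+$ must be constant. Let $\beta_j$ be the $\omega^j$-eigenspace of $u$, giving $\C^n=\bigoplus_{j=0}^{k-1}\beta_j$. From $\Phi(\omega\lambda)=\Phi(\lambda)u$ together with $u=\sum_j\omega^j P_{\beta_j}$, each Fourier coefficient $A_n$ of $\Phi$ satisfies $A_nP_{\beta_j}=0$ unless $n\equiv j\pmod k$, which yields the clean factorisation
\[
\Phi(\lambda)=\Psi(\lambda^k)\,D(\lambda),\qquad D(\lambda):=\sum_{j=0}^{k-1}\lambda^j P_{\beta_j},
\]
where $\Psi(\mu):=\Phi(\lambda)D(\lambda)^{-1}$ is well-defined in $\mu=\lambda^k$ (a direct check gives $u\,D(\omega\lambda)^{-1}=D(\lambda)^{-1}$) and inherits unitarity from $\Phi$ and $D$.

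Setting $\alpha_j:=\bigoplus_{m\le j}\beta_m$ produces the required nested chain $\alpha_0\subseteq\cdots\subseteq\alpha_{k-2}\subseteq\C^n$. To arrive at $V_j=\Psi(\alpha_j+\lambda\H_+)$, I would start from $W_j=\Phi\H_+^{(j)}$ with $\H_+^{(j)}:=\{h\in\H_+:h(\omega\lambda)=\omega^j u^{-1}h(\lambda)\}$, decompose each $h\in\H_+^{(j)}$ into its $\beta_m$-components, note that the smallest non-negative power of $\lambda$ needed in the $\beta_m$-component is $(j-m)\bmod k$, and substitute $\Phi=\Psi(\lambda^k)D(\lambda)$ to cancel the various $\lambda$-factors; the case $j=k-1$ specialises to $V_{k-1}=\Psi\H_+$. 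The final expression for $W$ follows from $W=\bigoplus_j\lambda^j\{g(\lambda^k):g\in V_j\}$ together with the identity $\lambda^{k-1}\C^n+\lambda^k\H_+=\lambda^{k-1}\H_+$. The main obstacle I anticipate is precisely this bookkeeping in the $\beta_m$-components; the factorisation $\Phi=\Psi(\lambda^k)D(\lambda)$ is exactly what organises the $\lambda$-powers of $\Phi$ in lockstep with the $\beta_m$-decomposition and makes the calculation tractable.
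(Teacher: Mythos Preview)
Your treatment of parts (i) and (ii) matches the paper's: both dismiss (i) as spectral theory for the unitary $\hat\omega|_W$ and obtain the filtration in (ii) directly from shift-invariance.

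For part (iii) you take a genuinely different route. The paper argues abstractly: it verifies that $V_{k-1}$ is not backward-shift-invariant and has full range $\overline{\bigvee_{n\ge 0}S^{-n}V_{k-1}}=L^2(S^1,\C^n)$ (both by contradiction, using the hypothesis $W=\Phi\H_+$), then invokes the Beurling--Lax--Halmos theorem from Helson to get \emph{some} unitary-valued $\Psi$ with $V_{k-1}=\Psi\H_+$; the $\alpha_j$ then drop out of the sandwich $\lambda V_{k-1}\subseteq V_j\subseteq V_{k-1}$ after multiplying by $\Psi^{-1}$. You instead construct $\Psi$ explicitly: from $\Phi(\omega\lambda)=\Phi(\lambda)u$ with $u$ constant unitary and $u^k=I$, you diagonalise $u$ into eigenspaces $\beta_j$, set $D(\lambda)=\sum_j\lambda^jP_{\beta_j}$, and factor $\Phi(\lambda)=\Psi(\lambda^k)D(\lambda)$; the $\alpha_j=\bigoplus_{m\le j}\beta_m$ are then read off by tracking the $\beta_m$-components through $\H_+^{(j)}$. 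This is correct, and it is precisely the content of the paper's \emph{next} proposition (Proposition~\ref{choice}), which refines Proposition~\ref{shift-inv}(iii) by pinning down a canonical choice of $\Psi$ related to $\Phi$. So you have effectively merged the two propositions: the paper first proves bare existence via Helson, then separately exhibits the natural $\Psi$; you go straight to the explicit construction. The paper's two-step organisation buys a cleaner existence proof with no bookkeeping, while your approach buys the explicit formula for $\Psi$ up front and avoids appealing to the invariant-subspace classification.
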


		\begin{proof} Part (i) is straightforward, as well as the representation of $W_j$ in (ii). The rest of (ii) follows directly from the shift-invariance of $W$. To see (iii), note that the representation  $V_{k-1}=\Psi \H_+$, with $\Psi$ unitary-valued a.e., follows (see \cite[Lecture VI]{He}),  once we show that $V_{k-1}$ is not invariant for the inverse of the shift and
\begin{equation}\label{fiberdim} \overline{\bigvee_{n\ge 0}S^{-n}V_{k-1}}=L^2(S^1,\C^n).\end{equation}
			If $V_{k-1}$ is  invariant for the inverse of the shift, then $SV_{k-1}=V_{k-1}$; hence  by (ii), $V_{k-1}=V_0=V_j,~0<j<k-1$, and thus $W_j=S^{j}W_0$,  and we arrive  easily at the contradiction $S^{-1}W\subset W$. Moreover, if \eqref{fiberdim} fails, there exists a $g\in L^2(S^1,\C^n)\setminus\{0\}$ with inner product $$\langle h(\lambda), g(\lambda)\rangle=0,$$
			a.e., for all $h\in V_{k-1}$. This leads to $$\langle f(\lambda), g(\lambda^k)\rangle=0,$$
			a.e., for  all $f\in W$ and contradicts the hypothesis $W=\Phi\mathcal{H}_+$.
			Thus $V_{k-1}=\Psi\H_+$ with $\Psi~\U(n)$-valued a.e., and from the inclusions $\lambda V_{k-1}\subseteq V_j\subseteq V_{k-1}$ we obtain that $\Psi^{-1}V_j$ consists of functions whose first Fourier coefficient lies in a given subspace $\alpha_j$ of $\C^n$. These subspaces $\alpha_j$ are nested since the subspaces $V_j$ are.  Then
$$			
\Psi^{-1}V_j=\alpha_j+\lambda\H_+,
$$
			and the remaining assertions follow.
		\end{proof}
		
	\begin{proposition}\label{symfiltration}
					With the notations of Proposition \ref{shift-inv}, the correspondence between $k$-symmetric shift-invariant subspaces $W$ and filtrations $V_0\subseteq V_1\subseteq \cdots\subseteq V_{k-1}$ satisfying  $S V_{k-1}\subseteq V_0$ is one-to-one.
				\end{proposition}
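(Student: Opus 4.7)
\medskip

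\noindent\textbf{Proof proposal.} Proposition \ref{shift-inv} already produces a filtration $V_0\subseteq\cdots\subseteq V_{k-1}$ with $SV_{k-1}\subseteq V_0$ from any $k$-symmetric shift-invariant $W$. The plan is therefore to check (a) injectivity of the correspondence and (b) surjectivity by constructing an inverse map.

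For (a), suppose $W$ and $W'$ are $k$-symmetric shift-invariant subspaces producing the same filtration $(V_0,\ldots,V_{k-1})$. Formula \eqref{eigenspace} recovers each eigenspace $W_j$ from $V_j$ alone (the map $f(\lambda)\mapsto S^j f(\lambda^k)$ depends only on $V_j$), so $W_j=W'_j$ for all $j$, and the eigendecomposition \eqref{eigendecompsoition} forces $W=W'$.

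For (b), given any filtration $V_0\subseteq V_1\subseteq\cdots\subseteq V_{k-1}$ of closed shift-invariant subspaces of $L^2(S^1,\C^n)$ with $SV_{k-1}\subseteq V_0$, I would define
$$ W_j := S^j\bigl\{g\in L^2(S^1,\C^n) : g(\lambda)=f(\lambda^k),\ f\in V_j\bigr\},\qquad 0\le j\le k-1,$$
and then set $W:=\bigoplus_{j=0}^{k-1} W_j$. The isometry $\iota\colon f(\lambda)\mapsto f(\lambda^k)/\sqrt{k}$ (or the obvious bounded analogue) shows each $W_j$ is closed; mutual orthogonality of the $W_j$ is free, because the identity $\hat\omega(S^j f(\lambda^k))=\omega^j S^j f(\lambda^k)$ (using $\omega^k=1$) exhibits $W_j$ as contained in the $\omega^j$-eigenspace of the unitary operator $\hat\omega$, and distinct eigenspaces of a unitary operator are orthogonal. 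Hence $W$ is closed and $\hat\omega$-invariant, i.e.\ $k$-symmetric, and by construction its eigenspace decomposition under $\hat\omega$ is exactly $\bigoplus_j W_j$.

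The one genuine check — and what I would regard as the main obstacle, though it is short — is shift-invariance of $W$. For $f\in V_j$ and $g(\lambda)=\lambda^j f(\lambda^k)\in W_j$, one computes
$$ S g(\lambda)=\lambda^{j+1}f(\lambda^k).$$
If $j<k-1$, the filtration condition $V_j\subseteq V_{j+1}$ shows $f\in V_{j+1}$, so $Sg\in W_{j+1}\subseteq W$. If $j=k-1$, write $Sg(\lambda)=\widetilde f(\lambda^k)$ with $\widetilde f(\mu)=\mu f(\mu)\in SV_{k-1}\subseteq V_0$; hence $Sg\in W_0\subseteq W$. Thus $SW\subseteq W$. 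Finally, to see that the filtration recovered from this $W$ via Proposition \ref{shift-inv}(ii) agrees with the given one, observe that the recovered $V_j$ is characterized by $W_j=S^j\{f(\lambda^k):f\in V_j\}$, and this is exactly how we defined $W_j$; so injectivity of the map $f\mapsto f(\lambda^k)$ yields equality of the filtrations. This establishes the bijection.
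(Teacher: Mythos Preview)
Your proof is correct and follows essentially the same route as the paper: injectivity from \eqref{eigendecompsoition}--\eqref{eigenspace}, and surjectivity by defining $W$ via those same formulas and checking that it is $k$-symmetric and shift-invariant. The paper dispatches the shift-invariance and $k$-symmetry with a single ``Clearly'', whereas you supply the explicit verification (including the case split $j<k-1$ versus $j=k-1$ that uses $V_j\subseteq V_{j+1}$ and $SV_{k-1}\subseteq V_0$); this is a welcome elaboration rather than a different argument.
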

				\begin{proof}
					If $W$ and $W'$ are two  $k$-symmetric shift-invariant subspaces with the same filtration $V_0\subseteq V_1\subseteq \cdots\subseteq V_{k-1}$, then by \eqref{eigendecompsoition} and
					\eqref{eigenspace}, we must have $W=W'$.
					
					Conversely, if $V_0\subseteq V_1\subseteq \cdots\subseteq V_{k-1}$ is a filtration satisfying $S V_{k-1}\subseteq V_0$, consider the subspace $W$ defined by   \eqref{eigendecompsoition} and \eqref{eigenspace}. Clearly, $W$ is shift-invariant and $k$-symmetric. Moreover, the eigenspace decomposition of $W$ induces the given  filtration.
				\end{proof}
				
			As pointed out in \cite[\S 3.1]{aleman-pacheco-wood1},  the  unitary-valued function $\Psi$  in Proposition \ref{shift-inv} is unique up to  multiplication from the  right by  a constant unitary matrix (see  \cite{Nikolskii}), which affects the subspaces $\alpha_j$  as well. However, if $W=\Phi\H_+$,  there is a natural choice of $\Psi$ which relates it to the function
$\Phi$, as follows.
				
				\begin{proposition}\label{choice}  Let $W$ be a $k$-symmetric shift-invariant subspace such that  $W=\Phi\H_+$ with $\Phi$ measurable and $\U(n)$-valued  a.e.\ on $S^1$. Then there exists a constant  $\varphi_k\in \U(n)$ with $\varphi_k^k=I$ such that
\begin{equation}\label{twisted}
\Phi(\o\lambda)=\Phi(\lambda)\varphi_k.
\end{equation}
					If $\beta_j=\ker (\varphi_k-\o^jI)$,  and $\pi_j$ denotes the orthogonal projection from
$\C^n$ onto $\beta_j$, then
\begin{equation}\label{twistedk}				
\Phi_k(\lambda)=\Phi(\lambda)\sum_{j=0}^{k-1}\pi_j\lambda^{-j}
\end{equation}
					is a function of $\lambda^k$ and
					Proposition \ref{shift-inv}(iii) holds with
					$\Psi(\lambda)=\Phi_k(\lambda^{1/k})$ and
\begin{equation} \label{alpha-beta}
\alpha_j=\bigoplus_{l=0}^j\beta_l.
\end{equation}
					In particular, if\/ $ W=\Phi(\cdot,z)\H_+$,
					where $\Phi:S^1\times M\to \U(n)$ is smooth, $k$-symmetric and has $\Phi(1,\cdot)=I$, then $\Psi$ is a smooth map on $S^1\times M$ with $\Psi(1,\cdot)=I$, and $\alpha_j,~0\le j<k-1$, are smooth subbundles of the trivial bundle $\CC^n:= M \times \C^n$  on $M$.\end{proposition}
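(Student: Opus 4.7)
The plan is to derive the proposition in four steps, matching the four assertions made.

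First, I would establish the twisting relation \eqref{twisted}. The $k$-symmetry of $W = \Phi\H_+$ implies that for every $h \in \H_+$, the function $\lambda \mapsto \Phi(\omega\lambda)h(\omega\lambda)$ again lies in $\Phi(\lambda)\H_+$. Since $h \mapsto h_\omega$ is a bijection of $\H_+$ to itself, this gives $\Phi(\omega\lambda)\H_+ = \Phi(\lambda)\H_+$, so $\Phi(\lambda)^{-1}\Phi(\omega\lambda)$ is a pointwise multiplier preserving $\H_+$. The standard uniqueness for the Beurling--Lax--Halmos representation (both the multiplier and its inverse must be bounded analytic, hence constant since they are $\U(n)$-valued a.e.) then forces $\Phi(\lambda)^{-1}\Phi(\omega\lambda) = \varphi_k$ for some constant $\varphi_k \in \U(n)$; iterating and using $\omega^k = 1$ yields $\varphi_k^k = I$.

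Next, since $\varphi_k^k = I$, it is diagonalisable with eigenvalues among the $k$th roots of unity, so $\C^n = \bigoplus_{j=0}^{k-1} \beta_j$ orthogonally with $\varphi_k\pi_j = \omega^j\pi_j$. Substituting \eqref{twisted} directly into the definition \eqref{twistedk} gives
\begin{equation*}
\Phi_k(\omega\lambda) = \Phi(\lambda)\varphi_k\sum_{j=0}^{k-1}\pi_j\omega^{-j}\lambda^{-j} = \Phi(\lambda)\sum_{j=0}^{k-1}\pi_j\lambda^{-j} = \Phi_k(\lambda),
\end{equation*}
so $\Phi_k$ is $\hat\omega$-invariant and descends to a function of $\lambda^k$, allowing us to define $\Psi(\mu) = \Phi_k(\mu^{1/k})$ unambiguously.

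For the third assertion, I would invert \eqref{twistedk} as $\Phi(\lambda) = \Phi_k(\lambda)\sum_{l=0}^{k-1}\pi_l\lambda^l$ and compute
\begin{equation*}
\Phi\H_+ = \Phi_k(\lambda)\Bigl(\sum_{l=0}^{k-1}\pi_l\lambda^l\Bigr)\H_+.
\end{equation*}
Collecting Fourier coefficients, the coefficient of $\lambda^m$ in $\bigl(\sum_l\pi_l\lambda^l\bigr)h$ equals $\sum_{l=0}^{\min(m,k-1)}\pi_l h_{m-l}$, which ranges over $\bigoplus_{l=0}^m \beta_l = \alpha_m$ for $0 \le m \le k-1$ (with the convention $\alpha_{k-1} = \C^n$) and freely over $\C^n$ for $m \ge k$. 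This yields the claimed formula for $W$ and, combined with Proposition \ref{shift-inv}(iii) and the uniqueness in Proposition \ref{symfiltration}, identifies the filtration as $V_j = \Psi(\alpha_j + \lambda\H_+)$ with $\alpha_j = \bigoplus_{l=0}^j\beta_l$.

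For the parametric statement, setting $\lambda=1$ in \eqref{twisted} and using $\Phi(1,\cdot)=I$ yields $\varphi_k(z) = \Phi(\omega,z)$, which is smooth in $z$. Since $\varphi_k(z)^k = I$, its eigenvalues lie in the discrete set of $k$th roots of unity; by continuity of the spectrum and connectedness of $M$ the multiplicities of each $\beta_j(z)$ are constant, so the spectral projections $\pi_j(z)$, and hence $\Phi_k$, $\Psi$, and the subbundles $\alpha_j$, depend smoothly on $z$; finally $\Psi(1,z) = \Phi(1,z)\sum_l\pi_l(z) = I$. I expect the main technical point to be the bookkeeping in the Fourier-coefficient computation of step three, which is what makes the nested subspaces $\alpha_j$ appear in exactly the order predicted; everything else is a direct consequence of the spectral decomposition of $\varphi_k$ and of standard Hardy space uniqueness.
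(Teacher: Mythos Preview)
Your argument is correct and follows the paper closely in the first two steps. For the third assertion the paper takes a different route: rather than inverting \eqref{twistedk} and computing $\bigl(\sum_l \pi_l\lambda^l\bigr)\H_+$ directly, it identifies each eigenspace $W_j$ by characterising $\{g\in\H_+:\varphi_k g_\omega=\omega^j g\}$ coefficient-by-coefficient, obtaining $W_j=\lambda^j\Psi(\lambda^k)\{h(\lambda^k):h\in\alpha_j+\lambda\H_+\}$ and hence the $V_j$ without any appeal to uniqueness. Your approach reaches the displayed formula for $W$ more quickly but then relies on Proposition~\ref{symfiltration} to pin down the filtration; this is legitimate, though one small point deserves attention: your Fourier-coefficient computation really only shows the inclusion $\bigl(\sum_l\pi_l\lambda^l\bigr)\H_+\subseteq\alpha_0+\lambda\alpha_1+\cdots+\lambda^{k-1}\H_+$, and you should note that the reverse inclusion follows because $\sum_l\pi_l\lambda^l$ is unitary and $\bigl(\sum_l\pi_l\lambda^{-l}\bigr)g\in\H_+$ whenever $g$ lies in the right-hand side. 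For the smoothness assertion the paper uses the explicit Lagrange interpolation formula
\[
\pi_j=\prod_{i\ne j}(\varphi_k-\omega^iI)\Big/\prod_{i\ne j}(\omega^j-\omega^i),
\]
which is more elementary than your spectral continuity argument and makes the smooth dependence of the $\pi_j$ on $z$ immediate.
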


				\begin{proof}
					The equality \eqref{twisted}, with $\varphi_k$  constant, follows as above from \cite {Nikolskii} and  $\Phi(\lambda)\H_+=\Phi(\o\lambda)\H_+$. A repeated application of it gives $\varphi_k^k=I$.  Since $\varphi_k\pi_j=\o^j\pi_j$, $\Phi_k$ defined by \eqref{twistedk} is clearly a function of $\lambda^k$.

{}From the identity \eqref{twisted}  it follows that the subspaces $W_j,~0\le j\le k-1$, introduced in Proposition \ref{shift-inv}(i) can be written as
					$$W_j=\{f\in W:~f_\o=\o^jf\}=\Phi\{g\in \H_+:~\varphi_k g_\o=\o^jg\}.$$
					A function $g\in \H_+$ with Fourier coefficients $g_m,~m\ge 0$, satisfies $\varphi_k g_\o=\o^jg$ if and only if, for $m=sk+l,~0\le l\le k-1$, we have
					$$\varphi_kg_m=\o^{j-l}g_m,$$
					or equivalently,
					$g_m\in \beta_{j-l}$ when $j\ge l$ and $g_m\in \beta_{k+j-l}$ when $l>j$. For $m=sk+l,~0\le l\le k-1$, set
					$$h_s=\sum_{l=0}^{k-1}g_{ks+l}$$
					and note that, since the $\beta_l$ are pairwise orthogonal, we have
					$$g(\lambda)=\left(\sum_{l\le j}\pi_{j-l}\lambda^l+\sum_{l> j}\pi_{k+j-l}\lambda^l\right)\sum_{s\ge  0}h_s\lambda^{ks}.$$
					The argument is clearly reversible and we obtain
					$$\bigl\{g\in \H_+:~\varphi_k g_\o=\o^jg\bigr\}=\left(\sum_{l\le j}\pi_{j-l}\lambda^l+\sum_{l> j}\pi_{k+j-l}\lambda^l\right)\bigl\{h(\lambda^k):~h\in \H_+\bigr\}.$$
Consequently,
					$$W_j=\lambda^j\Phi\left(\sum_{l\le j}\pi_{j-l}\lambda^{l-j}+\sum_{l> j}\pi_{k+j-l}\lambda^{l-j}\right)
\bigl\{h(\lambda^k):~h\in \H_+\bigr\}.$$
					In particular,
					$$W_{k-1}=\lambda^{k-1}\Phi_k\{h(\lambda^k):~h\in \H_+\}.$$
					Set  $\Psi(\lambda)=\Phi_k(\lambda^{1/k})$. Using again the pairwise orthogonality of the $\beta_l,~0\le l\le k-1$, we see that $\Phi_k(\lambda^{1/k})$ is $\U(n)$-valued a.e.\ and
							$$\Psi(\lambda^k)^{-1}\Phi\left(\sum_{l\le j}\pi_{j-l}\lambda^{l-j}+\sum_{l> j}\pi_{k+j-l}\lambda^{l-j}\right)=\sum_{l\le j}\pi_{j-l}+\sum_{l> j}\pi_{k+j-l}\lambda^{k}.$$
 On the other hand, in view of Proposition \ref{shift-inv}, we have $$\lambda^{-j}\Psi(\lambda^k)^{-1}W_j=\alpha_j+\lambda^k\mathcal{H}_+,$$
and equation \eqref{alpha-beta} follows.
					
Finally, if   $\Phi$ is smooth on $S^1\times M$ then $\varphi_k$ is smooth on $M$, hence each $\pi_j,~0\le j\le k-1$, is smooth on $M$ since it is a polynomial  in $\varphi_k$:
$$\prod_{i=0\atop i\ne j}^{k-1}(\v_k-\o^i I)=\prod_{i=0\atop i\ne j}^{k-1}(\o^j-\o^i)\pi_j.$$
The result follows.
				\end{proof}

\section{$k$-symmetric extended solutions}\label{ksymmetry}
				We assume throughout that
\begin{equation*}\label{w's}  W=\Phi\H_+,
\end{equation*}
with $\Phi:S^1\times M\to \U(n)$ smooth and $\Phi(1,\cdot)=I$. As we said before, $\Phi$ can  be considered as a map from $M$ into the loop group $\Omega \U(n)$.

				We are interested in the case when $W$ is an extended solution corresponding to a harmonic map defined on a Riemann surface $M$.
				We use the same notations as in Proposition \ref{shift-inv}.
				\begin{proposition}\label{ext-soln-components}
Let $W$ be k-symmetric.
The following are equivalent$:$
\begin{enumerate}				
					\item[(i)] $W$ is an extended solution$;$
					\item[(ii)] $V_0\subseteq V_1\subseteq \ldots \subseteq V_{k-1}$ is a \emph{$\lambda$-cyclic superhorizontal sequence}, that is, $V_j,~0\le j\le k-1$, are  extended solutions, $\partial_z V_j\subseteq V_{j+1},~0\le j<k-1$, and $\lambda\partial_z V_{k-1}\subseteq V_0$.
\end{enumerate}					
				\end{proposition}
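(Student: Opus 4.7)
The plan is to decompose $W$ into its $\hat\omega$-eigenspaces $W_0,\ldots,W_{k-1}$ provided by Proposition \ref{shift-inv}(i) and to translate the two defining relations $S\partial_z W \subseteq W$ and $\partial_{\bar z} W \subseteq W$ into conditions on the $V_j$ using the identification $W_j = S^j\{f(\lambda^k): f\in V_j\}$ from Proposition \ref{shift-inv}(ii).

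First I would record the commutation rules $\hat\omega\,\partial_{\bar z} = \partial_{\bar z}\hat\omega$ and $\hat\omega S = \omega S\hat\omega$, both immediate from $(\hat\omega f)(\lambda)=f(\omega\lambda)$ and the constancy of $\omega$. Consequently $\partial_{\bar z}$ preserves each eigenspace $W_j$, while $S\partial_z$ carries $W_j$ into $W_{j+1\bmod k}$. Because $W=\bigoplus_j W_j$, the conditions on $W$ split componentwise: $\partial_{\bar z} W \subseteq W$ is equivalent to $\partial_{\bar z} W_j \subseteq W_j$ for every $j$, and $S\partial_z W\subseteq W$ is equivalent to $S\partial_z W_j \subseteq W_{j+1\bmod k}$ for every $j$.

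Next I would compute these conditions on a typical element. If $g(\lambda)=\lambda^j f(\lambda^k)\in W_j$ with $f\in V_j$, then
\begin{equation*}
\partial_{\bar z} g(\lambda)=\lambda^j (\partial_{\bar z} f)(\lambda^k), \qquad S\partial_z g(\lambda)=\lambda^{j+1}(\partial_z f)(\lambda^k).
\end{equation*}
The first identifies $\partial_{\bar z} W_j \subseteq W_j$ with $\partial_{\bar z} V_j \subseteq V_j$. For the second: when $j<k-1$, the right-hand side lies in $W_{j+1}$ iff $\partial_z f \in V_{j+1}$, yielding the superhorizontality condition $\partial_z V_j\subseteq V_{j+1}$; when $j=k-1$, one rewrites $\lambda^{k}(\partial_z f)(\lambda^k)=h(\lambda^k)$ with $h=S\partial_z f$, so that membership in $W_0$ is equivalent to $S\partial_z f\in V_0$, i.e., $\lambda\partial_z V_{k-1}\subseteq V_0$. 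This step is the whole content of both directions: the same identities that extract the filtration conditions of (ii) from (i) reassemble (ii) into (i).

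Finally, to finish (i)$\Rightarrow$(ii) I would verify that each $V_j$ is itself an extended solution. The relation $\partial_{\bar z} V_j\subseteq V_j$ is already in hand, and $S\partial_z V_j\subseteq V_j$ follows by combining the conditions just derived with the filtration $SV_{k-1}\subseteq V_0\subseteq V_j$ from Proposition \ref{symfiltration}: for $j<k-1$ one uses $\partial_z V_j \subseteq V_{j+1}\subseteq V_{k-1}$ to get $S\partial_z V_j \subseteq SV_{k-1}\subseteq V_0\subseteq V_j$, and for $j=k-1$ the $\lambda$-cyclic hypothesis gives $S\partial_z V_{k-1}\subseteq V_0\subseteq V_{k-1}$ directly. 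I expect the main obstacle to be bookkeeping at the boundary $j=k-1$, where the extra factor of $\lambda$ produced by $S\partial_z$ pushes the expression out of the canonical form $\lambda^j(\,\cdot\,)(\lambda^k)$; absorbing it into the $f$-slot is precisely what turns a merely superhorizontal filtration into a $\lambda$-cyclic one.
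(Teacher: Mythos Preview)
Your proof is correct and follows essentially the same route as the paper's: decompose $W$ into the $\hat\omega$-eigenspaces $W_j$, observe that $\partial_{\bar z}$ preserves each $W_j$ while $S\partial_z$ shifts $W_j$ to $W_{j+1\bmod k}$, and then translate these componentwise conditions into the corresponding conditions on the $V_j$ via the identification of Proposition~\ref{shift-inv}(ii). The paper compresses the translation and the verification that each $V_j$ is itself an extended solution into the single word ``clearly''; you have spelled out exactly those details, including the boundary case $j=k-1$ and the deduction of $S\partial_z V_j\subseteq V_j$ from the filtration $SV_{k-1}\subseteq V_0\subseteq V_j$.
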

				
				\begin{proof}   $W$ is an extended solution if and only if each $W_j,~0\le j\le k-1$, satisfies $\partial_{\bar{z}}W_j\subseteq W$, $\lambda \partial_z W_j\subseteq W$. But by the definition of $W_j$ this is equivalent to
					$\partial_{\bar{z}}W_j\subseteq W_j$, $\lambda \partial_z W_j\subseteq W_{j+1}$ if $0 \leq j<k-1$, and $\lambda \partial_z W_{k-1}\subseteq W_0$. Clearly, this is equivalent to (ii).\end{proof}
				An immediate consequence is that the function $\Psi$ defined in Proposition \ref{choice} must be an extended solution if $\Phi$  is.  Moreover, the general form of an extended solution $\Phi$ with the property that $\Phi(\o\lambda,z)=\Phi(\lambda,z)\varphi_k(z)$ (that is, $W=\Phi\mathcal{H}_+$ is $k$-symmetric)   is
				\begin{equation}\label{gen-form} \Phi(\lambda,z)=\Psi(\lambda^k,z)\prod_{j=0}^{k-2}(\pi_{\gamma_j}+\lambda\pi_{\gamma_j}^\perp),
				\end{equation}
				where $$\prod_{j=0}^{k-2}(\pi_{\gamma_j}+\lambda\pi_{\gamma_j}^\perp)\H_+=\alpha_0+\lambda\alpha_1+\ldots +\lambda^{k-2}\alpha_{k-2}+\lambda^{k-1}\H_+$$
				is $S^1$-invariant (see \S \ref{sec:intro}), but not necessarily an extended solution.  In fact, we can characterize this situation in terms of the function $\Psi$ and the subbundles $\alpha_j$, as follows;  see, for example, \cite[\S 4.3]{aleman-pacheco-wood1} for more information on the operator $D^{\psi}_{\zbar}$.

				\begin{theorem}\label{diff-condition}
					Let $\Psi:S^1\times M\to \U(n)$ be an extended  solution (with $\Psi(1,\cdot)=I$), let $\psi=\Psi(-1,\cdot)$, and $$A_z^\psi=\frac1{2}\psi^{-1}\partial_z\psi.$$
					If $\alpha_0\subseteq\ldots\subseteq \alpha_{k-2}$ are smooth subbundles of the trivial bundle $\CC^n = M\times\C^n$, then
					\begin{equation}\label{symetricextendedsolution}
					W=\Psi(\lambda^k,\cdot)(\alpha_0+\lambda\alpha_1+\ldots +\lambda^{k-2}\alpha_{k-2}+\lambda^{k-1}\H_+)
					\end{equation}
					is an extended solution if and only if the following conditions hold$:$
\begin{enumerate}					
					\item[(i)] for $0\le j< k-2$ we have $\partial_z\alpha_j\subseteq\alpha_{j+1};$
					\item[(ii)] $\alpha_{k-2}\subseteq \ker A_z^\psi$   and $\image A_z^\psi\subseteq\alpha_0;$
					\item[(iii)] for $0\le j \le k-2$, $\alpha_j$ is closed under $D^{\psi}_{\zbar}:= \partial_{\bar{z}} + A_{\bar{z}}^\psi$.
			\end{enumerate}			
				\end{theorem}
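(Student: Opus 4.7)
The approach is to apply Proposition~\ref{ext-soln-components} to the filtration associated with $W$ via Proposition~\ref{shift-inv}(iii), namely $V_j=\Psi(\lambda,\cdot)(\alpha_j+\lambda\H_+)$ for $0\le j\le k-2$ and $V_{k-1}=\Psi(\lambda,\cdot)\H_+$ (with $\alpha_{k-1}:=\C^n$). The last of these is an extended solution by hypothesis, and the inclusion $SV_{k-1}\subseteq V_0$ is automatic; so by Proposition~\ref{ext-soln-components}, $W$ is an extended solution iff (a) each $V_j$ with $j<k-1$ is an extended solution, (b) $\partial_z V_j\subseteq V_{j+1}$ for $0\le j\le k-2$, and (c) $\lambda\partial_z V_{k-1}\subseteq V_0$. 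My plan is to translate (a)--(c) into (i)--(iii) by reading off Laurent coefficients in $\lambda$.

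The key computation takes an arbitrary smooth section $f=\Psi(g_0+\lambda h)$ of $V_j$, with $g_0$ a section of $\alpha_j$ and $h$ a section of $\H_+$ with constant term $h_0\in\C^n$, and differentiates using the extended-solution identities $\Psi^{-1}\partial_z\Psi=(1-\lambda^{-1})A_z^\psi$ and $\Psi^{-1}\partial_{\bar{z}}\Psi=(1-\lambda)A_{\bar{z}}^\psi$ (which use the identification of the $\Psi$- and $\psi$-coefficients recorded just after~\eqref{extsol}). A direct expansion yields
\begin{align*}
\Psi^{-1}\partial_{\bar{z}}f &= D_{\bar{z}}^\psi g_0 \;+\; \lambda(\,\cdots\,),\\
\Psi^{-1}\partial_z f &= -\lambda^{-1}A_z^\psi g_0 \;+\; \bigl(\partial_z g_0 + A_z^\psi g_0 - A_z^\psi h_0\bigr) \;+\; \lambda(\,\cdots\,),
\end{align*}
where the omitted terms lie in $\lambda\H_+$. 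Membership of $\partial_{\bar{z}}f$ in $V_j=\Psi(\alpha_j+\lambda\H_+)$ forces $D_{\bar{z}}^\psi g_0\in\alpha_j$, producing~(iii). For~(b) with $j<k-2$, membership of $\partial_z f$ in $V_{j+1}$ forces both the $\lambda^{-1}$ coefficient to vanish (so $\alpha_j\subseteq\ker A_z^\psi$) and the constant coefficient to lie in $\alpha_{j+1}$ for every choice of $g_0$ and $h_0$, giving $\partial_z\alpha_j\subseteq\alpha_{j+1}$ (that is,~(i)) together with $\image A_z^\psi\subseteq\alpha_{j+1}$. For $j=k-2$ only the $\lambda^{-1}$-vanishing survives (since $V_{k-1}=\Psi\H_+$ imposes no restriction on the constant coefficient), yielding the full $\alpha_{k-2}\subseteq\ker A_z^\psi$ of~(ii). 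Finally, for $f=\Psi h$ a section of $V_{k-1}$, the constant coefficient of $\Psi^{-1}\lambda\partial_z f$ equals $-A_z^\psi h_0$, so~(c) forces $\image A_z^\psi\subseteq\alpha_0$ -- the stronger half of~(ii), which subsumes all weaker image inclusions arising under~(b).

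The second half of~(a), namely $\lambda\partial_z V_j\subseteq V_j$, is then automatic once~(b) holds, since $\lambda\partial_z V_j\subseteq\lambda V_{j+1}\subseteq\Psi\cdot\lambda\H_+\subseteq V_0\subseteq V_j$; the converse implication (i)--(iii)$\Rightarrow$($W$ is an extended solution) is obtained by reading the same expansions in reverse. I do not anticipate a serious obstacle -- the work is careful Laurent-coefficient bookkeeping -- but two points deserve emphasis: the strong inclusion $\image A_z^\psi\subseteq\alpha_0$ arises from the cyclicity condition on $V_{k-1}$ rather than from superhorizontality, and condition~(i) is stated only for $j<k-2$ because $\partial_z V_{k-2}\subseteq V_{k-1}=\Psi\H_+$ places no constraint on the constant coefficient.
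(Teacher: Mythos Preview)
Your proposal is correct and follows essentially the same approach as the paper's proof: both identify the filtration $V_j=\Psi(\alpha_j+\lambda\H_+)$, invoke Proposition~\ref{ext-soln-components}, and translate the $\lambda$-cyclic superhorizontal conditions into~(i)--(iii) via the extended-solution identities for $\Psi^{-1}\partial_z\Psi$ and $\Psi^{-1}\partial_{\bar z}\Psi$. The paper's version is terser (it simply asserts the equivalences $\image A_z^\psi\subseteq\alpha_0\Leftrightarrow\lambda\partial_zV_{k-1}\subseteq V_0$, etc.), whereas you spell out the Laurent-coefficient bookkeeping explicitly, but the underlying argument is the same.
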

				\begin{proof}  Note that $V_j=\Psi(\alpha_j+\lambda\H_+),~0\le j\le k-2$, and $V_{k-1}=\Psi\H_+$. The condition
					$\image A_z^\psi\subseteq\alpha_0$ is equivalent to $\lambda\partial_z V_{k-1}\subseteq V_0$ and, if it holds, then
					$\partial_zV_j\subseteq V_{j+1},~0\le j\le k-2$, become equivalent to $\alpha_j\subseteq \ker A_z^\psi$, $\partial_z\alpha_j\subseteq \alpha_{j+1}$. Finally, condition (iii) is equivalent to $\partial_{\bar{z}}V_j\subseteq V_j,~0\le j\le k-2$. Indeed, a direct calculation shows that, for $0\le j\le k-2$, we have $\partial_{\bar{z}}V_j\subseteq V_j$ if and only if, for every section $s$ in $\alpha_j$, we have
	$\partial_{\bar{z}}s + A_{\bar{z}}^\psi s \in \alpha_j$.
				\end{proof}	

			\begin{remark}\label{remarks-ksymmetry}\rm 			(a) If $k=2$, condition (i) in Theorem \ref{diff-condition} is empty.

(b)  In Theorem \ref{diff-condition}, if $\Psi = I$, then conditions (i)---(iii) are equivalent to \emph{$(\alpha_i)$ is a sequence of holomorphic subbundles which satisfies the superhorizontality condition
\eqref{superhor}}.  In that case, the extended solution $W = \Phi\H_+$ given by \eqref{symetricextendedsolution} is $S^1$-invariant.

(c)   The harmonic map $\varphi=\Phi(-1,\cdot)$ is given by $\varphi=\varphi_k^{k/2}$ if $k$ is even (if $k$ is odd this  is more complicated), where
$\varphi_k=\Phi(\omega,\cdot)=\sum_{j=0}^{k-1}\pi_j\omega^j$, as defined pointwise in Proposition \ref{choice}.
In \S \ref{primitivesection} we shall see that $\varphi_k$ corresponds to a \emph{primitive harmonic map} into a certain \emph{flag manifold} and that
 $\varphi$ corresponds to a harmonic map into a certain complex Grassmannian. In Theorem \ref{primitivekl}, we shall consider the more general case $\varphi_k^{k/s}$, with $s$ a divisor of $k$.

 (d)   Condition (ii) in Theorem \ref{diff-condition}  implies that
 \begin{equation}\label{2nilconformal}
(A_z^\psi)^2=0;
\end{equation} thus its trace also vanishes, which is easily seen to be the condition for \emph{(weak) conformality} (cf.\ \cite{wood-60}) of $\psi$.

(e) Conditions (ii) and (iii) imply that each $\alpha_j$ is a \emph{basic and antibasic uniton} with respect to $\psi$, i.e., $\alpha_j \subseteq \ker A_z^\psi$ and $\image A_z^\psi \subseteq \alpha_j$ (cf. \cite[Example 3.2]{svensson-wood-unitons}).

(f)  The extended solution $W = \Phi\H_+$  given by \eqref{symetricextendedsolution} is always $k$-symmetric. If $k>2$ and $\Psi$, $\alpha_j$ are as above, we can easily construct $l$-symmetric extended solutions for $2\le l<k$. We simply choose $0 \leq j_0<j_1<\ldots <j_{l-2} \leq k-2$ and set
\begin{equation}\label{Wl}
W=\Psi(\lambda^l,\cdot)(\alpha_{j_0}+\lambda \alpha_{j_1}+\ldots +\lambda^{l-2}\alpha_{j_{l-2}} +\lambda^{l-1}\H_+).
\end{equation}
In Remark \ref{w-1}(b) we shall discuss the corresponding primitive harmonic maps.
\end{remark}

If $\psi$ satisfies \eqref{2nilconformal}, we  shall say that $\psi$ is \emph{$2$-nilconformal}. A slightly different notion of `nilorder' is given by F.E.\ Burstall \cite{burstall-grass} for maps into Grassmannians.
In the next proposition, we give a complete characterization of those $2$-nilconformal harmonic maps that factor through a Grassmannian.  We first recall some definitions for such maps, see \cite[\S 4.3]{aleman-pacheco-wood1}, \cite{burstall-wood} and the references therein for more details.

 We represent smooth maps $\psi:M \to G_m(\C^n)$ from a surface into a (complex) Grassmannian as subbundles, denoted by the same letter, of the trivial bundle $\CC^n = M \times \C^n$.
We define the \emph{second fundamental form} $A'_{\psi}$ by $A'_{\psi}(s) = \pi_{\psi^{\perp}} \pa_z s$, $s \in \Gamma(\psi)$; this formula defines a linear bundle map from $\psi$ to $\psi^{\perp}$; it can be shown that $A'_{\psi} = -A^{\psi}_z|\psi$ and
$A'_{\psi^{\perp}} = -A^{\psi}_z|\psi^{\perp}$.
By a \emph{harmonic diagram}, we shall mean a \emph{diagram} in the sense of \cite{burstall-wood} of mutually orthogonal subbundles $\psi_i$ with sum $\CC^n$ and arrows between them; the arrow from $\psi_i$ to $\psi_j$ represents the $\psi_j$-component
$A'_{\psi_i,\psi_j} :=\pi_{\psi_j} \circ A'_{\psi_i}$ of $A'_{\psi_i}$, the absence of that arrow indicating that $A'_{\psi_i,\psi_j}$ is known to be zero.
For a \emph{harmonic} map $\psi$, we define the \emph{Gauss bundle} $G^{(1)}(\psi) = G'(\psi)$ as the image of $A'_{\psi}$ completed to a bundle by \emph{filling out zeros}; we iterate this construction to give the \emph{$i$th Gauss bundle}
$G^{(i)}(\psi)$ for $i=1,2,\ldots$.
Then the \emph{isotropy order} of a harmonic map $\psi:M \to G_{m}(\C^n)$ into a (complex) Grassmannian is defined to be the greatest value of $t \in \{1,2,\ldots,\infty\}$ such that $\psi$ is orthogonal to $G^{(i)}(\psi)$ for all $i$ with $1 \leq i \leq t$.

Note that \emph{any $2$-nilconformal harmonic map $\psi$ into a Grassmannian has isotropy order at least $2$}; indeed, the image of $(A^{\psi}_z)^2|\psi$ is $\pi_{\psi}\bigl(G^{(2)}(\psi)\bigr)$.

\begin{proposition}\label{nilprop}
 Suppose that we have a harmonic diagram of the form
\begin{equation}
\begin{gathered}\label{diag:circuit}
\xymatrixcolsep{3pc}
\xymatrix{
	\psi_0 \ar[r]_{A'_{\psi_0}} &  \psi_1
	\ar[r]_{A'_{\psi_1}} & \cdots \hspace{-4em} &\hspace{-3em} \ar[r]_{A'_{\psi_{t-2}}} &\psi_{t-1} \ar[r]_{A'_{\psi_{t-1}}} & \psi_t
\ar@/_1.2pc/[lllll]
}
\end{gathered}
\end{equation}	
where $t\geq 3$ (possibly infinite) and, for $0\leq i\leq t$, the bundle $\psi_i$ corresponds to a harmonic map $M \to G_{m_i}(\C^n)$ into a Grassmannian.

Then $\psi:=\psi_0\oplus \psi_{1}:M\to G_{m_0+m_{1}}(\C^n)$ is a $2$-nilconformal harmonic map of isotropy order at least $t-1$. Moreover, all $2$-nilconformal  harmonic maps into a Grassmannian
are given this way.
\end{proposition}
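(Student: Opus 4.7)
My plan is to separate the argument into the forward direction (the circuit \eqref{diag:circuit} produces a 2-nilconformal harmonic map $\psi$ of isotropy order $\ge t-1$) and the converse (every 2-nilconformal harmonic map into a Grassmannian arises this way). Both directions hinge on the identities $A^\psi_z|\psi = -A'_\psi$ and $A^\psi_z|\psi^\perp = -A'_{\psi^\perp}$ together with a careful analysis of $A^\psi_z$ with respect to the orthogonal decomposition $\C^n = \psi_0 \oplus \cdots \oplus \psi_t$.

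For the forward direction, I would first note that harmonicity of $\psi = \psi_0 \oplus \psi_1$ is inherited from the assumption that \eqref{diag:circuit} is a harmonic diagram. The absence of all other arrows in \eqref{diag:circuit} forces $A'_\psi|\psi_0 = 0$, $A'_\psi|\psi_1 \subseteq \psi_2 \subseteq \psi^\perp$, $A'_{\psi^\perp}|\psi_i = 0$ for $2 \le i \le t-1$, and $A'_{\psi^\perp}|\psi_t \subseteq \psi_0 \subseteq \psi$. Consequently $A^\psi_z$ has only two nonzero block components with respect to the above decomposition, namely the $\psi_1 \to \psi_2$ and the $\psi_t \to \psi_0$ parts. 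Composing $A^\psi_z$ with itself then produces the paths $\psi_1 \to \psi_2 \to 0$ and $\psi_t \to \psi_0 \to 0$, and vanishes everywhere else, so $(A^\psi_z)^2 = 0$. For the isotropy bound I would show by induction on $i \in \{1, \ldots, t-1\}$ that $G^{(i)}(\psi) \subseteq \psi_2 \oplus \cdots \oplus \psi_{i+1}$: the base case is $G^{(1)}(\psi) = \image A'_\psi \subseteq \psi_2$, and the inductive step follows since for any section $s$ of $G^{(i)}(\psi) \subseteq \psi_2 \oplus \cdots \oplus \psi_{i+1}$ the arrows of the diagram force $\pa_z s$ to lie in $\psi_2 \oplus \cdots \oplus \psi_{i+2}$. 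Since $\psi_j \subseteq \psi^\perp$ for $2 \le j \le t$, the isotropy order is at least $t-1$.

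For the converse, I would start from a 2-nilconformal $\psi$ and unfold the diagram from the nilpotency relations. Writing $(A^\psi_z)^2 = 0$ in block form yields $A'_\psi \circ A'_{\psi^\perp} = 0$ and $A'_{\psi^\perp} \circ A'_\psi = 0$. I would set $\psi_0 := \ker A'_\psi$, $\psi_1 := \psi \ominus \psi_0$, $\psi_2 := \image A'_\psi = G^{(1)}(\psi) \subseteq \psi^\perp$, and $\psi_t := (\ker A'_{\psi^\perp})^\perp \cap \psi^\perp$; the first nilpotency relation immediately gives $\image A'_{\psi^\perp} \subseteq \psi_0$, which provides the closing arrow back to $\psi_0$. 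The remaining bundles $\psi_3, \ldots, \psi_{t-1}$ are then defined inductively as the essential images of the second fundamental forms of the preceding bundles, taken inside $\psi^\perp$. The main obstacle will be verifying that this iteration closes up cleanly: that each newly-defined $\psi_{i+1}$ is orthogonal to all previously-defined $\psi_j$ and remains in $\psi^\perp$, that each $A'_{\psi_i}$ has its only nontrivial component landing in $\psi_{i+1}$, and that the chain terminates at a finite $t$ with $A'_{\psi_t}$ landing inside $\psi_0$. This requires propagating the 2-nilconformality through the $\pa_{\bar z}$-harmonicity equations satisfied by $\psi$ and invoking finite-dimensionality of $\C^n$ to force the process to terminate.
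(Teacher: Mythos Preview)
Your forward direction is essentially the paper's argument, with one point glossed over: harmonicity of $\psi=\psi_0\oplus\psi_1$ is not quite ``inherited'' for free from the diagram. The paper observes that $A'_\psi|_{\psi_0}=0$ and $A'_\psi|_{\psi_1}=A'_{\psi_1}$, so holomorphicity of $A'_{\psi_1}$ (which holds since $\psi_1$ is harmonic) forces holomorphicity of $A'_\psi$, whence $\psi$ is harmonic by \cite[Lemma~1.3(b)]{burstall-wood}. Your block computation of $(A^\psi_z)^2=0$ and the induction on the Gauss bundles for the isotropy bound are fine and amount to what the paper summarizes in one sentence.

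For the converse, the paper takes a much shorter route than you propose. You plan to pre-define $\psi_t=(\ker A'_{\psi^\perp})^\perp\cap\psi^\perp$ and then grow $\psi_3,\ldots,\psi_{t-1}$ by iterated second fundamental forms, hoping the chain lands exactly at your $\psi_t$; you correctly flag this ``closing up'' as the main obstacle, and you would also have to show each node is a genuine subbundle and a harmonic map. The paper sidesteps all of this: since $2$-nilconformality forces isotropy order $\ge 2$ (as noted just before the proposition), $\psi$ \emph{already} sits as the first node $\tilde\psi_0$ of a circuit \eqref{diag:circuit1} coming from its Gauss sequence (cf.\ Remark~\ref{general}(b)). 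The only remaining work is to split that first node as $\psi_0=\ker A'_{\tilde\psi_0}$ and $\psi_1=\psi\ominus\psi_0$; $2$-nilconformality gives $\image A'_{\tilde\psi_{t-1}}\subset\psi_0$, so the closing arrow factors through $\psi_0$, and \cite[Theorem~2.4]{burstall-wood} supplies harmonicity of $\psi_0$ and $\psi_1$. The advantage of the paper's approach is that the existence, orthogonality, subbundle structure, and harmonicity of the intermediate nodes are all imported from the Gauss-sequence theory rather than re-derived; your approach is not wrong in principle, but it rebuilds that machinery by hand.
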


\begin{proof}
  If 
 we have a diagram \eqref{diag:circuit} with $t\geq 3$, then $\psi:=\psi_0\oplus \psi_{1}$ has a diagram
  \begin{equation}
\begin{gathered}\label{diag:circuit1}
\xymatrixcolsep{3pc}
\xymatrix{
	\psi=\tilde \psi_0 \ar[r] &  \tilde\psi_1
	\ar[r] & \cdots \hspace{-4em} &\hspace{-3em} \ar[r] &\tilde\psi_{t-2} \ar[r] & \tilde\psi_{t-1}
\ar@/_1.2pc/[lllll]
}
\end{gathered}
\end{equation}	
with $\tilde\psi_i=\psi_{i+1}$ for $1\leq i\leq t-1$. Since $A'_\psi|\psi_1=A'_{\psi_1}$ and $A'_\psi|\psi_0=0$, $A'_\psi$ is holomorphic if $A'_{\psi_1}$ is  (see \cite[Proposition 1.2(iii)]{burstall-wood}), and so the harmonicity of $\psi$ follows directly from \cite[Lemma 1.3 (b)]{burstall-wood}.  Moreover, $\psi$ has isotropy order at least $t-1$ and clearly satisfies \eqref{2nilconformal}.

Conversely, suppose that $\psi$ is $2$-nilconformal.
 Then, as remarked above, it has isotropy order at least $2$ and so has a diagram \eqref{diag:circuit1} with $t \geq 3$.
As $\psi$ is $2$-nilconformal, $\image A'_{\tilde\psi_{t-1}} \subset \ker A'_{\tilde\psi_{0}}$.
Write  $\tilde{\psi}_0=\psi_0\oplus\psi_1$, where $\psi_0=\ker A'_{\tilde\psi_0}$.
It follows from \cite[Theorem 2.4]{burstall-wood} that the subbundles $\psi_0$ and $\psi_1$ of $\psi$ correspond to harmonic maps into Grassmannians.
Clearly, $\image A'_{\psi_0} \subset \psi_1$ and
 we have a diagram of the form \eqref{diag:circuit}, with  $\psi_{i+1}=\tilde\psi_i$ for $i\geq 1$.
\end{proof}

\begin{remark}\label{general}\rm
(a)  For any diagram
 of the form \eqref{diag:circuit} with $t \geq 2$, the maps represented by the subbundles $\psi_i$ are automatically harmonic by \cite[Proposition 1.6]{burstall-wood}.

(b) Given any harmonic map $\psi_0$ of finite isotropy order $t \geq 2$, there is a diagram \eqref{diag:circuit} with the $\psi_i = G^{(i)}(\psi)$ for $i=0,\ldots, t-1$, cf. \cite[\S 4.3]{aleman-pacheco-wood1}.  If $\psi_0$ has infinite isotropy order, there are diagrams \eqref{diag:circuit} with varying values of $t$ and some subbundles or arrows zero.
\end{remark}

 \begin{example}\label{gen:ex}\rm
Given a harmonic diagram \eqref{diag:circuit},  and an integer $d$ with $1 \leq d \leq t-2$, we can combine the vertices $\psi_1 + \ldots + \psi_d$ to give a subbundle and a diagram \eqref{diag:circuit} with $t-d+2 \geq 4$ vertices. By \cite[Proposition 1.6]{burstall-wood} $\psi_1 + \ldots + \psi_d$ represents a harmonic map.
The construction in Proposition \ref{nilprop} then gives a $2$-nilconformal harmonic map
$\psi = \psi_0 + \ldots + \psi_d$.  Then, for any $k$ with $2 \leq k \leq \min(d+1,t-d)$, the subbundles
$$
\alpha_j := \sum_{i=0}^j \psi_i \oplus \psi_{d+i+1},
		\quad i = 0,\ldots, k-2
$$
satisfy the conditions of Theorem \ref{diff-condition} for the harmonic map $\psi$.
\end{example}

 \begin{example}\label{clifford:ex}
\rm  Suppose that $\psi_0:\C\to \C P^{n-1}$ is a Clifford solution (see \cite[Example 4.14]{aleman-pacheco-wood1} and references therein). In
 homogeneous coordinates we have
	$\psi_0=[F]$ where $F=(F_0,\ldots,F_{n-1}):\C \to \C^n$ is given by
	$$F_i(z) = (1/\sqrt{n})\,\eu^{\omega^i z - \ov{\omega}^i \ov{z}}$$
	with $\omega = \eu^{2\pi\ii/n}$.
This is a  harmonic map with  isotropy order $t=n-1$.
Consider the harmonic diagram with vertices $\psi_i = G^{(i)}(\psi)$ for $i=0,\ldots, n-1$, as in (b) of Remark \ref{general}.

In view of Example \ref{gen:ex}, if we take $n\geq 4$ and $d=1$, we must have $k=2$. We then construct the  $2$-nilconformal harmonic map $\psi=\psi_0\oplus G^{(1)}(\psi_0)$.
The subbundle $\alpha_0=\psi_0\oplus  G^{(2)}(\psi_0)$ satisfies the conditions of
 Theorem  \ref{diff-condition}.

For $n\geq 5$ and $d=2$, we obtain the  $2$-nilconformal harmonic map $\psi=\psi_0\oplus G^{(1)}(\psi_0)\oplus G^{(2)}(\psi_0)$.  In this case, if $n=5$, we must have $k=2$. But if $n>5$, we can take $k=2$ or $k=3$. For $n>5$ and $k=3$, the subbundles
\begin{equation}\label{choice-alpha}
\alpha_0=\psi_0\oplus G^{(3)}(\psi_0),\quad \alpha_1= \psi_0\oplus G^{(1)}(\psi_0)\oplus G^{(3)}(\psi_0)\oplus G^{(4)}(\psi_0)
\end{equation}
satisfy the conditions of Theorem  \ref{diff-condition}.
\end{example}

 \begin{example}\label{holomor:ex}
\rm  Let $\psi:M\to\CP^{n-1}\hookrightarrow \U(n)$ be a full holomorphic, and so harmonic map. We clearly have $(A^\psi_z)^2=0$.  Observe that we can consider a harmonic diagram of the form \eqref{diag:circuit} with $\psi_0=0$, $\psi_1=\psi$ and $\psi_i=G^{(i-1)}(\psi)$ for $2\leq i\leq n$. Now we have no arrow from $\psi_n$ to $\psi_0$ nor from $\psi_0$ to $\psi_1$. Following the procedure of
Proposition \ref{nilprop}, we  write $\psi=\psi_0\oplus \psi_1$. Moreover, the bundles
$$\alpha_{j}=G^{(1)}(\psi)\oplus G^{(2)}(\psi) \oplus\ldots\oplus G^{(j+1)}(\psi),$$
with  $0\leq j\leq k-2$  satisfy the conditions of Theorem  \ref{diff-condition}  for any $k$ with $2 \leq k \leq n$.
\end{example}

 Recall from \cite{aleman-pacheco-wood1,uhlenbeck}  that  a harmonic map $\v:M\to \U(n)$ has \emph{finite uniton number} if there exists an extended solution $\Phi$ associated to $\v$ which is defined on the whole $M$  and is a trigonometric polynomial in $\lambda\in S^1$. Regarding this issue, we have the following.
 					
					\begin{proposition}\label{finite-uniton}
						Let  $\Phi$ be a $k$-symmetric  extended solution, and let $\Psi$ be the extended solution given by Proposition \ref{choice}. Then $\varphi=\Phi(-1,\cdot)$ has finite uniton number if and only if $\psi=\Psi(-1,\cdot)$   has.
					\end{proposition}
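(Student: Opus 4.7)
The plan is to leverage the multiplicative identity from Proposition \ref{choice}:
$$\Phi(\lambda,z) = \Psi(\lambda^k,z)\,Q(\lambda,z), \qquad Q(\lambda,z) = \sum_{j=0}^{k-1}\pi_j(z)\lambda^j,$$
in which $Q$ is a polynomial in $\lambda$ of degree $k-1$ with Laurent polynomial inverse $Q^{-1}(\lambda,z) = \sum_j \pi_j(z)\lambda^{-j}$. The key observation is that this identity gives a clean equivalence between the Laurent polynomiality of $\Phi$ and that of $\Psi$: if $\Psi$ is Laurent polynomial in its argument then $\Psi(\lambda^k,\cdot)$ is Laurent polynomial in $\lambda$ and, after multiplication by the polynomial $Q$, so is $\Phi$; conversely, if $\Phi$ is Laurent polynomial then $\Phi_k = \Phi\cdot Q^{-1}$ is Laurent polynomial in $\lambda$ and, being a function of $\lambda^k$, yields a Laurent polynomial $\Psi(\mu) = \Phi_k(\mu^{1/k})$.

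For the forward implication, suppose $\psi$ has finite uniton number, so there is a Laurent polynomial extended solution $\widetilde\Psi$ of $\psi$; by uniqueness of extended solutions up to left multiplication by a constant loop, $\widetilde\Psi = \tilde u\,\Psi$ for some constant loop $\tilde u$ with $\tilde u(1)=I$. I would then form $\widetilde\Phi(\lambda,z):= \tilde u(\lambda^k)\Phi(\lambda,z)$, which by the identity above equals $\widetilde\Psi(\lambda^k,z)\,Q(\lambda,z)$ and is therefore Laurent polynomial in $\lambda$. A direct check using $\widetilde\Phi^{-1}d\widetilde\Phi = \Phi^{-1}d\Phi$ confirms that $\widetilde\Phi$ is an extended solution associated to $\varphi$ with $\widetilde\Phi(-1,\cdot) = \tilde u((-1)^k)\,\varphi$, showing that $\varphi$ has finite uniton number.

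For the reverse implication, the analogous argument would require a $k$-periodic constant loop making $u_0\Phi$ Laurent polynomial, while finite uniton number of $\varphi$ a priori provides some constant loop $u_0$ that need not be $k$-periodic. To bridge this gap, I would appeal to the Grassmannian squeeze characterization of finite uniton number: $\varphi$ (respectively $\psi$) has finite uniton number if and only if the associated subspace $W = \Phi\H_+$ (respectively $V_{k-1} = \Psi\H_+$) satisfies $\lambda^N\H_+ \subseteq W \subseteq \lambda^{-N}\H_+$ for some $N$, see \cite{aleman-pacheco-wood1}. Then, exploiting the explicit formula $W = \Psi(\lambda^k,\cdot)(\alpha_0+\lambda\alpha_1+\cdots+\lambda^{k-1}\H_+)$ from Proposition \ref{shift-inv}(iii) and, in particular, the description $W_{k-1} = \lambda^{k-1}\Psi(\lambda^k,\cdot)\H_+(\lambda^k)$ for the $\omega^{k-1}$-eigenspace component, one can transfer squeeze bounds in both directions by intersecting $\lambda^{\pm N}\H_+$ with the $\omega^{k-1}$-eigenspace and then performing the change of variable $\mu = \lambda^k$; the filtration condition $\lambda V_{k-1}\subseteq V_j$ for all $j$ handles the propagation of the lower bound to the remaining eigenspace components $W_j$.

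The main obstacle is the asymmetric treatment of the two directions: the forward direction is a direct algebraic manipulation via the explicit multiplicative formula, whereas the reverse direction needs the Grassmannian characterization to sidestep the constant-loop ambiguity. The exponent bookkeeping under $\mu=\lambda^k$ is routine but must be done carefully; I expect the resulting bounds to be of the form $N_\varphi \approx kN_\psi + O(k)$, reflecting the degree cost of the $\lambda \mapsto \lambda^k$ substitution and the polynomial factor $Q$.
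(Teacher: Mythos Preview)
Your forward direction is correct and coincides with the paper's approach: from \eqref{gen-form} (your identity $\Phi=\Psi(\lambda^k,\cdot)Q$ with $Q$ and $Q^{-1}$ Laurent polynomial), a constant loop making $\Psi$ polynomial immediately yields one making $\Phi$ polynomial. The subtlety you flag in the reverse direction---that the constant loop $u_0$ with $u_0\Phi$ polynomial need not be $k$-periodic---is genuine; the paper's one-line proof does gloss over it.

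Your proposed Grassmannian fix, however, carries the same defect. The squeeze
$\lambda^N\H_+\subseteq W\subseteq\lambda^{-N}\H_+$ is equivalent to $\Phi$ \emph{itself} being Laurent polynomial, not to $\varphi$ having finite uniton number; the latter only gives the squeeze for $u_0W$ with $u_0$ an arbitrary constant loop. Since $u_0W$ is not $k$-symmetric unless $u_0$ is $k$-periodic, the eigenspace decomposition $W=\bigoplus W_j$ that you want to intersect with does not apply to $u_0W$, and you are back to the obstruction you set out to avoid.

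There is a direct argument which is presumably what the paper intends. If $u_0\Phi$ is Laurent polynomial, then so is
\[
R(\lambda,z):=u_0(\lambda)\Psi(\lambda^k,z)=(u_0\Phi)(\lambda,z)\,Q(\lambda,z)^{-1}.
\]
Fix $z_0\in M$. The loop $R(\,\cdot\,,z_0)$ is unitary on $S^1$ and Laurent polynomial, hence so is its inverse $R(\,\cdot\,,z_0)^{-1}=R(\,\cdot\,,z_0)^{*}$. Therefore
\[
\Psi(\lambda^k,z_0)^{-1}\Psi(\lambda^k,z)=R(\lambda,z_0)^{-1}R(\lambda,z)
\]
is Laurent polynomial in $\lambda$, and, being a function of $\lambda^k$, is Laurent polynomial in $\mu=\lambda^k$. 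Setting $v(\mu):=\Psi(\mu,z_0)^{-1}$ (a constant loop with $v(1)=I$), $v\Psi$ is a Laurent-polynomial extended solution associated to $\psi$, so $\psi$ has finite uniton number. This stays entirely within \eqref{gen-form} and elementary facts about unitary Laurent polynomials, with no appeal to the Grassmannian squeeze characterisation.
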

					\begin{proof}  It follows directly from the equality \eqref{gen-form} that  $\Phi$ is  polynomial up to left multiplication by  a constant loop if and only if $\Psi$ is also polynomial up to left multiplication by a constant loop.
						\end{proof}

\section{Primitive harmonic maps into $k$-symmetric spaces}\label{primitivesection}
 A \emph{(regular) $k$-symmetric space of a compact semisimple Lie group $G$} is a homogeneous space $G/K$ such that ($G^{\tau})_0 \subseteq K \subseteq
G^{\tau}$ for some automorphism $\tau:G \to G$ of finite order $k \geq 2$;
here $G^{\tau}$ denotes the fixed point set of $\tau$ and $(G^{\tau})_0$ its identity component.
 For $k=2$, this is just a \emph{symmetric space} of $G$. In this section we shall explain how $k$-symmetric extended solutions
correspond to primitive harmonic maps into a $k$-symmetric space. For further details on primitive harmonic maps, we refer the reader to \cite{BurstallPedit}.

	Given positive integers $r_0,\ldots,r_{k-1}$ with $r_0+\ldots+r_{k-1}=n$, let $F_{r_0,\ldots,r_{k-1}}$ be the flag manifold of  ordered sets $(A_0,\ldots, A_{k-1})$ of complex vector subspaces  of $\C^n$, with $\C^n=\bigoplus_{i=0}^{k-1}A_i$ and $\di A_i=r_i$. The unitary group $\U(n)$ acts transitively on  $F=F_{r_0,\ldots,r_{k-1}}$ with isotropy subgroups  conjugate to $\U(r_0)\times \ldots \times \U(r_{k-1})$.
	Fix a point $x_0=(A_0,\ldots, A_{k-1})\in F$.
	For each $i\in\{0,\ldots,{k-1}\}$, let $\pi_{A_i}$ denote the orthogonal (Hermitian) projection onto $A_i$.
Let $s\in \Omega \U(n)$ be defined by
	\begin{equation}\label{gl}
	s(\lambda)=\sum_{i=0}^{k-1} \lambda^{i}\pi_{A_i}
	\end{equation}
and consider the loop $\sigma(\lambda)=\mathrm{Ad}_{s(\lambda)}$ of inner automorphisms of $\mathfrak{u}(n)$ defined by
$$\sigma(\lambda)(X)=s(\lambda)X s(\lambda)^{-1}, \quad X\in \mathfrak{u}(n).$$
Set  $\omega=e^{2\pi i/k}$  and
	$\tau=\sigma(\omega)^{-1}$.

  The automorphism $\tau$ induces an eigenspace decomposition
	$\gl(n,\C)=\bigoplus_{i\in\mathbb{Z}_{k}}\mathfrak{g}^i,$
	where
\begin{equation}\label{gis}	
\mathfrak{g}^i=\bigoplus_{j\in \mathbb{Z}_{k}} \mathrm{Hom}(A_j,A_{j-i})
\end{equation}
	is the $\omega^i$-eigenspace  of $\tau$.  Clearly, $\overline{\mathfrak{g}^i}={\mathfrak{g}^{-i}}$.  The automorphism $\tau$ exponentiates to give an order $k$ automorphism of $\U(n)$, also denoted by $\tau$, whose fixed-set subgroup $\U(n)^\tau$ is precisely the isotropy group at $x_0$. Hence, $F$ has  a canonical structure of a $k$-symmetric space. Moreover, $F$ can be embedded in $\U(n)$ as a connected component of $\sqrt[k]{I}$ via the \emph{Cartan embedding}  $\iota:  F\to  \sqrt[k]{I}\subset \U(n)$ defined by $\iota(gx_0)=gs(\omega) g^{-1}$ (note that when $k > 2$, this is not totally geodesic).

	A smooth map $\varphi:M\to  F$ is said to be \emph{primitive} (see \cite{BurstallPedit} for further details) if, given a  lift $\psi:M \to \U(n)$ with $\varphi=\psi x_0$ (such lifts always exist locally),
	the following holds: $\psi^{-1}\psi_z$ takes values in $\mathfrak{g}^0\oplus \mathfrak{g}^{-1}$. Since such a lift
	is unique up to right multiplication by some smooth map $K:M\to \U(n)^\tau$, this definition of primitive map does not depend on $\psi$. If $k\geq 3$, then any  primitive map $\varphi:M\to F$ is harmonic
	with respect  to the metric on $F$ induced by the Killing form of $\mathfrak{u}(n)$ (as a matter of fact, $\varphi$ is harmonic with respect
	to all invariant metrics on $F$
	for which $\mathfrak{g}^{-1}$ is isotropic \cite{black}). For $k=2$, all smooth maps into $F$  are primitive. By \emph{primitive harmonic map} into  $F$
	we mean a primitive map if $k\geq 3$ and a harmonic map if $k=2$.
	
	Let $\varphi:M\to  F$ be a primitive harmonic map and $\psi:M \to \U(n)$ a lift.
	Consider the $\gl(n,\C)$-valued $1$-form $\alpha=\psi^{-1}d\psi$ on $M$ and let $\alpha=\alpha'+\alpha''$ be the type decomposition of $\alpha$ into a $(1,0)$-form and a $(0,1)$-form on $M$. Since $\varphi$ is primitive,  we can write uniquely
	$\alpha'=\alpha'_{-1}+\alpha'_0$ and $\alpha''=\alpha''_{1}+\alpha''_0$ where $\alpha'_0,\alpha'_{-1}$ are  $\mathfrak{g}^0,\mathfrak{g}^{-1}$-valued, respectively, and $\alpha''_0,\alpha''_{1}$ are $\mathfrak{g}^0,\mathfrak{g}^{1}$-valued, respectively.
	The loop of $1$-forms
	$
	\alpha_\lambda=\alpha'_{-1}\lambda^{-1}+\alpha_0+\alpha''_{1}\lambda$,
	with $\alpha_0=\alpha'_0+\alpha''_0$,
	takes values in the Lie algebra of the infinite-dimensional Lie group
\begin{equation} \label{Lambda_tau}
\Lambda_\tau \U(n)=\{\gamma:S^1\to \U(n)\,\,\mbox{smooth}:\,\,\, \tau\big(\gamma(\lambda)\big)=\gamma({\omega\lambda})\}
\end{equation}
	and satisfies the integrability condition $d\alpha_\lambda+\frac12[\alpha_\lambda\wedge \alpha_\lambda]=0$. This means that we can integrate to obtain a smooth  map
	$\Psi:M\to \Lambda_\tau \U(n)$
	such that $\Psi(1,\cdot)=\psi$ and, for each $\lambda\in S^1$, $\varphi_\lambda=\Psi(\la,\cdot) x_0$ is a primitive harmonic map;
	$\Psi$ is called an \emph{extended framing} associated to $\varphi$.

	Moreover, as in \cite{correia-pacheco}, $\Phi=s\Psi\Psi(1,\cdot)^{-1}$ is an extended solution, and a short calculation shows that the original map is recovered via the Cartan embedding by evaluating $\Phi$ at $\lambda=\omega$, that is,  $\iota\circ \varphi = \Phi(\omega,\cdot)$. Observe that this extended solution takes values in
\begin{equation}\label{Omega-omega}
\Omega^\omega \U(n)=\{\gamma\in \Omega \U(n):\,\, \gamma(\lambda)\gamma(\omega)=\gamma(\omega\lambda)\}.
\end{equation}
Clearly, given $\gamma \in\Omega^\omega \U(n)$, the corresponding shift-invariant subspace satisfies the symmetry condition \eqref{symmetrycondition}. Then the extended solution $W=\Phi\mathcal{H}_+$ is $k$-symmetric.

Conversely, by Theorem \ref{diff-condition}, we see that any $k$-symmetric extended solution $W$ corresponds to a smooth map $\Phi:M\to \Omega^\omega \U(n)$ of the form
$$\Phi(\lambda, \cdot)=\Psi(\lambda^k,\cdot)\sum_{j=0}^{k-1}\pi_j\lambda^j,$$
where $\pi_j$ is the orthogonal projection onto $\beta_j=\alpha_j\cap \alpha_{j-1}^\perp$ (here we take $\alpha_{-1}$ to be the zero vector bundle and $\alpha_{k-1}$ to be the trivial bundle $M\times \C^n$).

Evaluating at $\lambda=\omega$, we obtain the map
$$\Phi(\omega,\cdot)=\sum_{j=0}^{k-1}\pi_j\omega^j,$$
which can be identified via the Cartan embedding with the map 	$\varphi$ with values in $F_{r_0,r_1,\ldots,r_{k-1}}$ given by 					
\begin{equation*}\label{vbeta}
\varphi =(\beta_0,\beta_1,\ldots,\beta_{k-2},\beta_{k-1}),
\end{equation*} where $r_i=\di \beta_i$. Conditions (i)---(iii) in Theorem \ref{diff-condition} imply that $\varphi$ is primitive harmonic map. This can be slightly generalized as follows.
			
\begin{theorem}\label{primitivekl}
 Let $W=\Phi \H_+$ be a $k$-symmetric extended solution and let $l$ be a divisor of \/ $k$. Consider the vector bundles
 $\beta^l_i= \bigoplus_{j=i\mod l} \beta_j$, and set ${s}_i=\di \beta^l_i.$
  Then
\begin{equation}\label{varphi_l}
\varphi_l=(\beta^l_0, \beta^l_1,\ldots, \beta^l_{l-1}):M\to F_{{s}_0,\ldots,{s}_{l-1}}\end{equation}
is a primitive harmonic map.
\end{theorem}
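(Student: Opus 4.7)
The plan is to reduce to the $k = l$ case that was established in the paragraphs immediately before the theorem. The key insight is that divisibility $l \mid k$ forces the given $k$-symmetric extended solution $W$ to also be $l$-symmetric, and then the correspondence between $l$-symmetric extended solutions and primitive harmonic maps into flag manifolds (already worked out in the discussion preceding the statement) delivers $\varphi_l$ directly.

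First, I would unpack the $k$-symmetry. By Proposition \ref{choice}, $\Phi(\omega_k \lambda, \cdot) = \Phi(\lambda, \cdot)\varphi_k$ with $\varphi_k = \sum_{j=0}^{k-1} \pi_j \omega_k^{j}$, where $\pi_j$ is the orthogonal projection onto $\beta_j$. Writing $\omega_l = \omega_k^{k/l}$, iterating the twisting relation $k/l$ times yields
\begin{equation*}
\Phi(\omega_l \lambda, \cdot) = \Phi(\lambda,\cdot)\,\varphi_k^{k/l}.
\end{equation*}
A direct computation, using that the $\beta_j$ are mutually orthogonal and regrouping the terms according to the residue class of $j$ modulo $l$, gives
\begin{equation*}
\widetilde{\varphi}_l := \varphi_k^{k/l} = \sum_{j=0}^{k-1}\pi_j \omega_k^{jk/l} = \sum_{j=0}^{k-1}\pi_j \omega_l^{j} = \sum_{i=0}^{l-1}\omega_l^{i}\,\widetilde{\pi}_i,
\end{equation*}
where $\widetilde{\pi}_i := \sum_{j\equiv i \bmod l}\pi_j$ is the orthogonal projection onto $\beta^l_i$. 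Since $\widetilde{\varphi}_l$ is a constant element of $\U(n)$ (in the loop variable) with $\widetilde{\varphi}_l^{\,l}=I$, the same argument used after Proposition \ref{choice} shows that $f \in W$ implies $f_{\omega_l} \in W$, so $W$ is also $l$-symmetric.

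Next, I would apply Proposition \ref{choice} and Proposition \ref{ext-soln-components} to $W$ viewed as an $l$-symmetric extended solution: the $\omega_l^i$-eigenspaces of the resulting constant twist $\widetilde{\varphi}_l$ are exactly $\beta^l_i$ with $\dim \beta^l_i = s_i$, and one obtains a factorization
\begin{equation*}
\Phi(\lambda,\cdot) = \widetilde{\Psi}(\lambda^{l},\cdot)\sum_{i=0}^{l-1} \widetilde{\pi}_i \lambda^{i}
\end{equation*}
with $\widetilde{\Psi}$ an extended solution and the bundles $\beta^l_0 \oplus \cdots \oplus \beta^l_{i}$ satisfying the superhorizontality/kernel/image/$D^{\widetilde{\psi}}_{\bar z}$-closedness conditions of Theorem \ref{diff-condition}. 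At this point I can simply invoke the correspondence spelled out in the paragraph preceding the theorem: evaluating at $\lambda = \omega_l$ gives $\Phi(\omega_l,\cdot) = \sum_{i=0}^{l-1}\widetilde{\pi}_i\omega_l^{i}$, which under the Cartan embedding $\iota$ is exactly the flag $(\beta^l_0,\ldots,\beta^l_{l-1})$, and the construction of $\Phi$ from an extended framing forces this map to be primitive harmonic into $F_{s_0,\ldots,s_{l-1}}$.

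The step I expect to require the most care is the verification that when we re-apply the $k$-symmetric machinery with $l$ in place of $k$, the $l$-indexed filtration inherited from the $k$-indexed one really is a $\lambda$-cyclic superhorizontal sequence of extended solutions, so that the hypotheses of Theorem \ref{diff-condition} (and hence the primitivity of $\varphi_l$) are genuinely available. This should follow by regrouping the $\beta_j$'s into blocks indexed by residues mod $l$ and tracing through the definitions of $\widetilde{\alpha}_i$, but it is the one place where one must check that no off-diagonal contributions appear when passing from the $k$-step to the $l$-step filtration. Once this bookkeeping is done, the identification $\iota \circ \varphi_l = \Phi(\omega_l,\cdot)$ is immediate and the theorem follows.
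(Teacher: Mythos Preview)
Your proposal is correct and follows essentially the same route as the paper: observe that $l\mid k$ makes $W$ automatically $l$-symmetric, compute $\Phi(\omega_l,\cdot)=\sum_{i=0}^{l-1}\omega_l^{i}\widetilde{\pi}_i$, and then invoke the discussion immediately preceding the theorem to identify this, via the Cartan embedding, with the primitive harmonic map $\varphi_l$. Your final ``bookkeeping'' worry is unnecessary, since once $W$ is known to be an $l$-symmetric extended solution, Proposition~\ref{ext-soln-components} already guarantees that the associated $l$-step filtration is a $\lambda$-cyclic superhorizontal sequence without any further checking.
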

\begin{proof}
  If $W=\Phi \H_+$ is a $k$-symmetric extended solution associated to the primitive harmonic map $\varphi$, then for any divisor $l$ of\/ $k$, $W=\Phi \H_+$ can also be seen as an $l$-symmetric  extended solution. Let
 $\omega_l:=\omega^{k/l}$ be the primitive $l$th root of unity. Then the smooth map
  $$\v_l:=\Phi(\omega_l,\cdot)=\sum_{i=0}^{l-1}\omega_l^i\sum_{j=i  \mod l} \pi_j$$
 takes values in a connected component of $\sqrt[l]{I}$ and can be identified, via the Cartan embedding of  $F_{{s}_0,\ldots,{s}_{l-1}}$, with $\varphi_l$ given by \eqref{varphi_l}.
  By the previous discussion, $\varphi_l$ is a primitive harmonic map.
\end{proof}

\begin{remark}\label{w-1}\rm
(a) If $k$ is even, the smooth map ${\v}_2=\Phi(\omega,\cdot)^{k/2} =\sum_{j=0}^{k/2-1}\bigl(\pi_{2j}-\pi_{2j+1}\bigr)$ corresponds to a harmonic map $\varphi_2$ into the complex Grassmannian $G_m(\C^n)$, with $m=\sum r_{2j}$. In this case, we have $\varphi_2=p\circ \varphi$, where $p$ is the \emph{canonical homogeneous projection} (see \cite[Ch.~4]{BurstallRawnsley}) of the $k$-symmetric space  $F_{r_0,r_1,\ldots,r_{k-1}}$  onto the  $2$-symmetric space  $G_m(\C^n)$ of\/ $\U(n)$.

(b)  We point out that, in general, the primitive maps $\varphi_l$ are different from those of Remark \ref{remarks-ksymmetry}(f). As a matter of fact, for any $l\leq k$, choose $0\leq j_0<j_1<\ldots <j_{l-2}<j_{l-1}=k-1$. The primitive harmonic map
$\tilde{\varphi}_l$ associated to the $l$-symmetric extended solution \eqref{Wl} is given by
$\tilde{\varphi}_l=(\tilde\beta^l_0, \tilde\beta^l_1,\ldots, \tilde\beta^l_{l-1}):M\to F_{\tilde{s}_0,\ldots,\tilde{s}_{l-1}}$
where
$$\tilde\beta^l_i= \bigoplus_{j=j_{i-1}+1}^{j_i} \beta_j,\quad \tilde{s}_i=\di \tilde\beta^l_i.$$
Observe that the isotropy subgroup $\U(\tilde{s}_0)\times\ldots \times \U(\tilde {s}_{l-1})$ of  $F_{\tilde{s}_0,\ldots,\tilde{s}_{l-1}}$ contains the isotropy subgroup
  of $F_{r_0,\ldots,r_{k-1}}$ and that $ \tilde{\varphi}_l=\tilde{p}\circ \varphi$, where  $\tilde{p}: F_{r_0,\ldots,r_{k-1}}\to F_{\tilde{s}_0,\ldots,\tilde{s}_{l-1}}$  is the corresponding homogeneous projection.
 \end{remark}

\begin{example}\rm
 Consider a full holomorphic map $\psi:M\to\CP^3\hookrightarrow \U(4)$, and let $\pi_\psi$ denote the orthogonal projection onto $\psi$. The corresponding extended solution is $\Psi(\lambda,\cdot)=\pi_\psi+\lambda\pi_\psi^\perp$ and we have $(A^\psi_z)^2=0$.
Set $\alpha_0=G^{(1)}(\psi)$ and $\alpha_1=G^{(1)}(\psi)\oplus G^{(2)}(\psi)$. As observed in Example \ref{holomor:ex}, these subbundles satisfy the conditions of Theorem \ref{diff-condition}  with $k=3$. Then we get a $3$-symmetric extended solution
				$$W=\bigl(\pi_\psi+\lambda^3\pi_\psi^\perp\bigr)\bigl(G^{(1)}(\psi)+\lambda (G^{(1)}(\psi) \oplus G^{(2)}(\psi))+\lambda^2\mathcal{H}_+\bigr).
						$$
 Writing $W=\Phi\H_+$, on putting $\lambda = \omega_3$ we get
$$\Phi(\omega_3,\cdot) = \pi_{G^{(1)}(\psi)} + \omega_3 \pi_{G^{(2)}(\psi)}
+ \omega_3^2 \pi_{\psi+ G^{(3)}(\psi)}$$ which corresponds to the primitive harmonic map  $$\varphi:M\to F_{1,1,2}, \qquad\varphi=\bigl(G^{(1)}(\psi),G^{(2)}(\psi),\psi \oplus G^{(3)}(\psi)\bigr).$$
However, $W$ is $S^1$-invariant; in fact, multiplying out we see that
$$
W =\lambda^2 \bigl\{\psi+\lambda(\psi\oplus G^{(1)}(\psi))+\lambda^2 (\psi\oplus G^{(1)}(\psi)\oplus G^{(2)}(\psi))+\lambda^3{\mathcal{H}}_+\bigr\},
$$
hence $W =\Phi\H_+$ is $k$-symmetric for any $k \geq 2$.  Now, for any $n$ and $k$ with $2 \leq k \leq n$, there are $k$-symmetric quotients of\/ $\U(n)$ given by flag manifolds and we can interpret $\Phi$ as the Cartan embedding of a primitive harmonic map into such a flag manifold.
In the present example, with $k=4$, $\Phi(\omega_4,\cdot)$ is the primitive harmonic map $$\varphi:\C\to F_{1,1,1,1}, \qquad\varphi=(\psi,G^{(1)}(\psi),G^{(2)}(\psi),G^{(3)}(\psi));$$
with $k=2$, $\Phi(\omega_2,\cdot)$ is the (primitive)	harmonic map given by $\psi \oplus G^{(2)}(\psi)$, in accordance with Remark \ref{w-1}(a).
\end{example}

 \begin{example}
\rm Let  $\psi_0:\C\to \C P^{5}$ be a Clifford solution, as in Example \ref{clifford:ex}. Fix $\psi=\psi_0\oplus G^{(1)}(\psi_0)\oplus G^{(2)}(\psi_0)$ and the bundles $\alpha_0$ and $\alpha_1$ given by \eqref{choice-alpha},  which satisfy the conditions of Theorem \ref{diff-condition} with respect to $\psi$ and $k=3$. By applying Theorem \ref{primitivekl} with $l=k$, these define the primitive harmonic map
 $$\varphi=\big(\psi_0\oplus G^{(3)}(\psi_0), G^{(1)}(\psi_0)\oplus G^{(4)}(\psi_0), G^{(2)}(\psi_0)\oplus G^{(5)}(\psi_0)\big):\C\to F_{2,2,2}.$$
\end{example}

\section{Loop group description}\label{loops}
Recall the definitions of $\Lambda_\tau \U(n)$ and $\Omega^\omega \U(n)$ given by \eqref{Lambda_tau} and \eqref{Omega-omega} respectively.
There is a well-known method
for obtaining harmonic maps into  Lie groups from primitive harmonic maps (see \cite[Ch.~21, Sec.~IV]{guest-book} and references therein) which makes use of the  isomorphism (see also \cite[Lemma 5.1]{pacheco-twistor-prim}) $\Gamma_\tau: \Lambda \U(n)\to\Lambda_\tau \U(n)$ given by
$$\Gamma_\tau(\gamma)(\lambda)=\mathrm{Ad}_{s(\lambda)^{-1}}\gamma(\lambda^k)=s(\lambda)^{-1}\gamma(\lambda^k)s(\lambda)$$
					with inverse $\Gamma_\tau^{-1}: \Lambda_\tau \U(n)\to\Lambda \U(n)$  given by
					$$\Gamma_\tau^{-1}(\gamma)(\lambda)=\mathrm{Ad}_{s(\lambda^{1/k})}\gamma(\lambda^{1/k})=s(\lambda^{1/k})\gamma(\lambda^{1/k})s(\lambda^{-1/k}).$$
We shall now establish how the subspace $V_{k-1}$ associated to a shift-invariant $k$-symmetric space $W$ as in Proposition \ref{shift-inv} can be expressed in terms of $\Gamma_\tau$.
We denote by $\Omega_\tau \U(n)$  the subset of $\Omega^\omega \U(n)$ defined by: $\Phi\in \Omega_\tau \U(n)$ if $\Phi(\omega,\cdot)$ lies in the connected component of $\sqrt[k]{I}$ containing $s(\omega)$.
	
		\begin{lemma}\label{Theta}
		The correspondence $\Theta$ between  left cosets  of $\U(n)^\tau$  in  $\Lambda_\tau \U(n)$
		and loops in $\Omega_\tau \U(n)$
		given by $\Theta\big(\tilde\Phi \U(n)^\tau\big)=s\tilde\Phi \tilde\Phi(1)^{-1}$
		is bijective.
	\end{lemma}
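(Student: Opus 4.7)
My plan is to verify four points in turn: well-definedness on cosets, that the image of $\Theta$ lies in $\Omega_\tau\U(n)$, injectivity, and surjectivity. The key algebraic identity used throughout is the product formula $s(\omega\lambda) = s(\omega)s(\lambda) = s(\lambda)s(\omega)$, which follows at once from the pairwise orthogonality of the projections $\pi_{A_i}$ in \eqref{gl}.

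For well-definedness, any element $K\in\U(n)^\tau$, viewed as a constant loop, lies in $\Lambda_\tau\U(n)$ because $\tau(K)=K$. Replacing $\tilde\Phi$ by $\tilde\Phi K$ then gives $s\tilde\Phi K(\tilde\Phi(1)K)^{-1} = s\tilde\Phi\tilde\Phi(1)^{-1}$, so $\Theta$ depends only on the coset. To see that $\gamma := s\tilde\Phi\tilde\Phi(1)^{-1}$ lies in $\Omega_\tau\U(n)$, I would check three things: $\gamma(1)=s(1)=I$ since $\sum_i\pi_{A_i}=I$; the twisted loop condition $\gamma(\lambda)\gamma(\omega)=\gamma(\omega\lambda)$, which follows by inserting $\tilde\Phi(\omega\lambda)=s(\omega)^{-1}\tilde\Phi(\lambda)s(\omega)$ and using the commutativity of $s(\omega)$ with $s(\lambda)$; and finally $\gamma(\omega)=\tilde\Phi(1)s(\omega)\tilde\Phi(1)^{-1}$, which is a $\U(n)$-conjugate of $s(\omega)$ and therefore lies in the prescribed connected component.

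Injectivity is quick: if $\Theta(\tilde\Phi\U(n)^\tau)=\Theta(\tilde\Phi'\U(n)^\tau)$, then the loop $\tilde\Phi'^{-1}\tilde\Phi$ equals the constant element $\tilde\Phi'(1)^{-1}\tilde\Phi(1)$, and a constant loop belonging to $\Lambda_\tau\U(n)$ must be fixed by $\tau$, hence lies in $\U(n)^\tau$.

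Surjectivity is the main obstacle. Given $\Phi\in\Omega_\tau\U(n)$, I would try $\tilde\Phi(\lambda) := s(\lambda)^{-1}\Phi(\lambda)h$ for some $h\in\U(n)$ to be determined; then $\tilde\Phi(1)=h$ and so $s\tilde\Phi\tilde\Phi(1)^{-1}=\Phi$ is automatic. The condition $\tau(\tilde\Phi(\lambda))=\tilde\Phi(\omega\lambda)$, after using $\Phi(\omega\lambda)=\Phi(\lambda)\Phi(\omega)$ and the identity for $s$, reduces to the single requirement $h^{-1}\Phi(\omega)h=s(\omega)$. The existence of such an $h$ is precisely the statement that $\Phi(\omega)$ is $\U(n)$-conjugate to $s(\omega)$, and here the hypothesis $\Phi\in\Omega_\tau\U(n)$ is essential: the connected components of $\sqrt[k]{I}\subset\U(n)$ coincide with its $\U(n)$-conjugacy classes, each determined by the multiplicities of the eigenvalues $1,\omega,\ldots,\omega^{k-1}$ (two such elements with matching multiplicities are simultaneously unitarily diagonalizable, hence conjugate; each conjugacy class is connected as the image of the connected group $\U(n)$). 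Since $\Phi(\omega)$ and $s(\omega)$ lie in the same component by assumption, the required $h$ exists, completing the argument.
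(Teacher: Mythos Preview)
Your proof is correct and follows essentially the same route as the paper's: the injectivity argument is identical (the loop $\tilde\Phi'^{-1}\tilde\Phi$ is constant, and a constant element of $\Lambda_\tau\U(n)$ is $\tau$-fixed), and for surjectivity both you and the authors set $\tilde\Phi=s^{-1}\Phi h$ and reduce to finding $h\in\U(n)$ with $\Phi(\omega)=hs(\omega)h^{-1}$, invoking the hypothesis that $\Phi(\omega)$ lies in the connected component of $s(\omega)$ in $\sqrt[k]{I}$. Your write-up is in fact more complete than the paper's, since you verify explicitly that $\Theta$ is well defined on cosets and that its image lands in $\Omega_\tau\U(n)$ (including the correct component), and you spell out why connected components of $\sqrt[k]{I}$ coincide with $\U(n)$-conjugacy classes; the paper takes all of this for granted.
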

	\begin{proof}
		Given $\Phi\in \Omega_\tau \U(n)$, there exists $g\in \U(n)$ such that $\Phi(\omega)=gs(\omega) g^{-1}$. It is easy to check that $\tilde{\Phi}=s^{-1}\Phi g$ is a loop in $\Lambda_\tau \U(n)$ and $\Theta\big(\tilde{\Phi} \U(n)^\tau\big)=\Phi$. Thus $\Theta$ is surjective.
		
		If $\tilde\Phi,\tilde\Phi'\in \Lambda_\tau \U(n)$ are such that $\Theta\big(\tilde\Phi \U(n)^\tau\big)=\Theta\big(\tilde\Phi' \U(n)^\tau\big)$, then we have $\tilde\Phi(1)^{-1}\tilde\Phi'(1)=\tilde\Phi^{-1}(\lambda)\tilde\Phi'(\lambda)$ for each $\lambda\in S^1$. Applying $\tau$ to  both sides, we get
		$$\tau \big(\tilde\Phi(1)^{-1}\tilde\Phi'(1)\big)= \tilde\Phi^{-1}(\omega\lambda)\tilde\Phi'(\omega\lambda)=\tilde\Phi(1)^{-1}\tilde\Phi'(1), $$
		hence $\tilde\Phi(1)^{-1}\tilde\Phi'(1)\in \U(n)^\tau$. This implies that   $\tilde\Phi \U(n)^\tau=\tilde\Phi' \U(n)^\tau$, that is, $\Theta$ is injective.
	\end{proof}

\begin{proposition}\label{W-V}
  Let $W=\Phi\mathcal{H}_+$ be a $k$-symmetric shift invariant subspace with $\Phi\in  \Omega_\tau \U(n)$. Take $\tilde\Phi\in \Lambda_\tau \U(n)$ such that $\Phi=\Theta\big(\tilde\Phi \U(n)^\tau\big)$. Then $V_{k-1}=\Gamma_\tau^{-1}(\tilde\Phi)\mathcal{H}_+$.
\end{proposition}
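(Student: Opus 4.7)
The plan is to compute both sides explicitly in terms of $\Phi$ and the element $g \in \U(n)$ used to diagonalise $\Phi(\omega)$, and show that they agree up to right multiplication by a constant unitary (which preserves $\mathcal{H}_+$).

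\textbf{Step 1: Identify $\varphi_k$ and the eigenprojections $\pi_j$.}
Since $\Phi \in \Omega_\tau \U(n) \subset \Omega \U(n)$, we have $\Phi(1,\cdot)=I$, so the constant loop $\varphi_k$ of Proposition \ref{choice} satisfies $\varphi_k = \Phi(1)^{-1}\Phi(\omega) = \Phi(\omega)$. By the definition of $\Omega_\tau\U(n)$ and Lemma \ref{Theta}, there exists $g \in \U(n)$ with $\Phi(\omega) = g s(\omega) g^{-1}$ and $\tilde\Phi = s^{-1}\Phi g$. Since the $\omega^j$-eigenspace of $s(\omega)$ is $A_j$, the $\omega^j$-eigenspace $\beta_j$ of $\varphi_k$ equals $gA_j$, and hence the orthogonal projection onto $\beta_j$ is $\pi_j = g\,\pi_{A_j}g^{-1}$.

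\textbf{Step 2: Rewrite $\Psi$ in terms of $s$.}
Substituting the expression for $\pi_j$ into the formula \eqref{twistedk} and using the definition \eqref{gl} of $s$, one finds
\[
\Phi_k(\lambda) = \Phi(\lambda)\, g\,s(\lambda^{-1})\,g^{-1},
\]
and therefore, from Proposition \ref{choice},
\[
\Psi(\lambda) = \Phi_k(\lambda^{1/k}) = \Phi(\lambda^{1/k})\,g\,s(\lambda^{-1/k})\,g^{-1},
\qquad V_{k-1} = \Psi\mathcal{H}_+.
\]

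\textbf{Step 3: Compute $\Gamma_\tau^{-1}(\tilde\Phi)$.}
Using $\tilde\Phi(\mu) = s(\mu)^{-1}\Phi(\mu)g$ and the definition of $\Gamma_\tau^{-1}$ from \S\ref{loops},
\[
\Gamma_\tau^{-1}(\tilde\Phi)(\lambda) = s(\lambda^{1/k})\,\tilde\Phi(\lambda^{1/k})\,s(\lambda^{-1/k}) = \Phi(\lambda^{1/k})\,g\,s(\lambda^{-1/k}),
\]
the factors $s(\lambda^{1/k})$ and $s(\lambda^{1/k})^{-1}$ in $\tilde\Phi$ cancelling.

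\textbf{Step 4: Compare and conclude.}
Comparing Steps 2 and 3,
\[
\Gamma_\tau^{-1}(\tilde\Phi)(\lambda) = \Psi(\lambda)\,g.
\]
Since $g\in \U(n)$ is a constant unitary matrix, right multiplication by $g$ preserves $\mathcal{H}_+$, hence
\[
\Gamma_\tau^{-1}(\tilde\Phi)\mathcal{H}_+ = \Psi g\mathcal{H}_+ = \Psi\mathcal{H}_+ = V_{k-1}.
\]

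The only delicate point is the well-definedness: a different choice of $\tilde\Phi$ in the same left coset $\tilde\Phi\,\U(n)^\tau$ corresponds to a different choice of $g$, which changes $\Gamma_\tau^{-1}(\tilde\Phi)$ by right multiplication by a constant element of $\U(n)^\tau$ and therefore leaves the image $\Gamma_\tau^{-1}(\tilde\Phi)\mathcal{H}_+$ unchanged. The main obstacle is Step 2, which requires recognising that the spectral sum in \eqref{twistedk}, conjugated into the standard basis by $g$, is exactly the loop $s$ that builds the twisting $\tau$; once this identification is made, Steps 3 and 4 are purely formal.
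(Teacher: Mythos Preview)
Your proof is correct and follows essentially the same route as the paper's. The paper writes $\Phi = s\tilde\Phi\tilde\Phi(1)^{-1}$ and uses the twisted relation to obtain $\Phi(\omega)=\tilde\Phi(1)s(\omega)\tilde\Phi(1)^{-1}$, which is exactly your diagonalisation $\Phi(\omega)=gs(\omega)g^{-1}$ with $g=\tilde\Phi(1)$; the remaining manipulations (expressing $\sum_j\pi_j\lambda^{-j}$ as $gs(\lambda)^{-1}g^{-1}$, substituting into $\Phi_k$, and matching with $\Gamma_\tau^{-1}(\tilde\Phi)$) are identical up to the order in which the constant unitary is absorbed into $\mathcal{H}_+$.
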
	
	\begin{proof}
	For  $W=\Phi\mathcal{H}_+$ with $\Phi\in  \Omega_\tau \U(n)$, the element $\varphi_k$ in Proposition \ref{choice} is precisely $\Phi(\omega)$ and, by Lemma \ref{Theta}, we can write $\Phi=s \tilde\Phi \tilde\Phi(1)^{-1}$ for some $\tilde\Phi\in \Lambda_\tau \U(n)$.

					Since $\tilde\Phi\in \Lambda_\tau \U(n)$, it satisfies  $\tau\big(\tilde\Phi(\lambda)\big)=\tilde\Phi(\lambda\omega)$. Evaluating at $\lambda=1$, we get $s(\omega)^{-1}\tilde\Phi(1)s(\omega)=\tilde\Phi(\omega,\cdot)$.
					Hence, $\Phi(\omega,\cdot)=\tilde\Phi(1)s(\omega)\tilde\Phi(1)^{-1}$, and we have $$\sum_{j=0}^{k-1}\pi_{\beta_j}\lambda^{-j}=\tilde\Phi(1)s(\lambda)^{-1}\tilde\Phi(1)^{-1}
$$
with the $\beta_j$ as in Proposition \ref{choice}.  Using this, we obtain
					\begin{align*}
					V_{k-1}=\Phi_k(\lambda^{1/k})\mathcal{H}_+&=\Phi(\lambda^{1/k})\tilde\Phi(1)s({\lambda^{-1/k}})\tilde\Phi(1)^{-1}\mathcal{H}_+\\ &=s(\lambda^{1/k})\tilde\Phi(\lambda^{1/k}) s(\lambda^{-1/k})\mathcal{H}_+=\Gamma_\tau^{-1}(\tilde\Phi)\mathcal{H}_+.
					\end{align*}	
\end{proof}

\begin{remark}\label{reverse}\rm
  \rm It was already known \cite[Ch.~21]{guest-book} that $\Gamma_\tau^{-1}$ is well-behaved with respect to harmonic maps, in the sense that if\/ $\tilde{\Phi}:M\to \Lambda_\tau\U(n)$ is an extended framing (corresponding to a certain primitive harmonic map), then, setting $F:=\Gamma_\tau^{-1}(\tilde{\Phi})$, the smooth map $FF_1^{-1}:M\to\Omega \U(n)$  is an extended solution (corresponding to a harmonic map into the group $\U(n)$).
Our results of \S \ref{invsubspaces} and \S \ref{ksymmetry} provide a more complete picture of this.
In fact, on using Proposition \ref{W-V} to interpret $\Gamma_\tau^{-1}$ in terms of the Grassmannian model and setting $V_{k-1}=\Gamma_\tau^{-1}(W)$, we have the following: The isomorphism $\Gamma_\tau^{-1}$ can be extended to an one-to-one correspondence between $k$-symmetric shift-invariant subspaces  and filtrations  $V_0\subseteq V_1 \subseteq\ldots \subseteq V_{k-1}$  satisfying  $\lambda V_{k-1}\subseteq V_0$; this correspondence induces a one-to-one correspondence between  $k$-symmetric
extended solutions  and $\lambda$-cyclic superhorizontal sequences of length $k$;
Theorem \ref{diff-condition} explains under what conditions the method of obtaining harmonic maps into $\U(n)$ from primitive harmonic maps by making use of $\Gamma^{-1}_\tau$ can be reversed in order to obtain primitive harmonic maps from certain harmonic maps into $\U(n)$.
 \end{remark}
	
\section{Holomorphic potentials.}\label{holomorphicpotentials}
In this section, we shall describe how the extended solutions $V_j$ arise via the Dorfmeister, Pedit and Wu \cite{DPW} method of obtaining harmonic maps from certain holomorphic forms.
							
							Consider the following space of loops:
							$$\Lambda_{-1,\infty}=\{\xi\in \Lambda\gl(n,\C)\}: \mbox{$\lambda\xi$ extends holomorphically to $|\lambda|<1$} \}.$$
							A $\Lambda_{-1,\infty}$-valued holomorphic 1-form $\mu$ on a simply connected Riemann surface $M$ is called a \emph{holomorphic potential} \cite{DPW}.	
In terms of a local coordinate $z$, we can write $\mu =\xi dz$, for some holomorphic function
							$$\xi=\sum_{i=-1}^\infty\xi_i\lambda^i:M\to \Lambda_{-1,\infty}.$$

 The holomorphicity of $\mu$ is equivalent to $\bar\partial \mu=0$. On the other hand, since $\partial\mu$ and $[\mu\wedge \mu]$ are $(2,0)$-forms on a surface, they are both zero. Hence, $d\mu+\frac12[\mu\wedge \mu]=\bar \partial\mu=0$. This means that we can integrate
							\begin{equation}\label{eq:gmu}(g^\mu)^{-1}dg^\mu=\mu,\quad g^\mu(0)=I
\end{equation}
to obtain a unique holomorphic map $g^\mu: M\to \Lambda \GL(n,\C)$.

							Consider the Iwasawa decomposition
\begin{equation} \label{Iwasawa}
\Lambda \GL(n,\C)=\Omega \U(n)\Lambda^+\GL(n,\C),
\end{equation}
							where $\Lambda^+\GL(n,\C)$ is the subgroup of loops $\gamma\in \Lambda  \GL(n,\C)$ which extend holomorphically to $|\lambda|<1$.
							We can decompose $g^\mu=\Phi^\mu b^\mu$ according to the Iwasawa decomposition; then $\Phi^\mu:M\to \Omega \U(n)$ is an extended solution (see \cite{correia-pacheco,DPW}).
							
							The holomorphic potential $\mu=\sum_{i=-1}^\infty\xi_i\lambda^idz$ is called \emph{$\tau$-twisted} if $$\tau(\xi(\lambda))=\xi(\omega\lambda).$$
							This condition is {independent of the choice of local coordinate and equivalent to the following: $\xi_i\in \mathfrak{g}^{i\!\mod k}$ for all $i\geq -1$.
							Now, if we start with a holomorphic $\tau$-twisted potential and proceed as above, we obtain an extended solution $\Phi^\mu$ satisfying
														$$\tau\bigl({\Phi}^\mu(\lambda,\cdot)\bigr)=\Phi^\mu(\omega\lambda,\cdot)\, \bigl(\Phi^\mu(\omega,\cdot)\bigr)^{-1}.$$

Hence, $\Phi=s\Phi^\mu$ takes values in $\Omega^\omega \U(n)$. Since  $\Phi$ is obtained from $\Phi^\mu$ by left multiplication by a constant loop in $\Omega \U(n)$, $\Phi$ is also an extended solution. Moreover, since $\Phi^\mu(\cdot,0)=I$, then $\Phi(\omega, 0)=s(\omega)$, which implies that $\Phi(\omega,\cdot)$ takes values in the connected component of $\sqrt[k]{I}$ containing $s(\omega)$, that is, it corresponds via the Cartan embedding to a primitive harmonic map in $F=F_{r_0,\ldots,r_{k-1}}$, as explained in  \S \ref{primitivesection}.  Observe that, since  $b^\mu\mathcal{H}_+=\mathcal{H}_+$, then the corresponding shift-invariant subspaces are given by
\begin{equation}\label{Wpotential}							
W=\Phi \mathcal{H}_+=sg^\mu\mathcal{H}_+.\end{equation}

\begin{theorem}\label{holomorphic-potentials}
Consider the $k$-symmetric space $F=F_{r_0,\ldots,r_{k-1}}$ with base point $x_0=(A_0,\ldots,A_{k-1})$,  $s \in \Omega\U(n)$ as in \eqref{gl} and canonical automorphism $\tau$.
Let $\mu$ be a $\tau$-twisted potential and let  $W=\Phi \mathcal{H}_+$ be the corresponding $k$-symmetric extended solution, with  $\Phi=s\Phi^\mu$. For each $0\leq j\leq k-1$, the $V_j$ of Proposition \ref{ext-soln-components} are given by
$$V_j=\gamma_j\Phi^{\bar\mu_j}\mathcal{H}_+$$
where $\bar\mu_j= \gamma^{-1}_j \bar \mu\gamma_j$,
\begin{equation}\label{mu:potential}
\bar \mu(\lambda)=s(\lambda^{1/k})\mu(\lambda^{1/k}) s({\lambda^{-1/k}})\end{equation}
								and $\gamma_j(\lambda)=\pi_{\bar A_j}+\lambda  \pi_{\bar A_j}^\perp$, with $\bar A_j=A_0\oplus A_1\oplus \ldots\oplus A_j$.
								In particular, taking $j=k-1$, since $\gamma_{k-1} = I$,  the map $\Psi$  defined pointwise by Proposition \ref{shift-inv} is given by $\Psi=\Phi^{\bar\mu}$.
							\end{theorem}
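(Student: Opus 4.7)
Since $b^\mu, b^{\bar\mu_j} \in \Lambda^+\GL(n,\C)$ preserve $\H_+$, and since $\gamma_j$ is constant in $z$ so that $g^{\bar\mu_j} = \gamma_j^{-1} g^{\bar\mu} \gamma_j$ by uniqueness of solutions of the Maurer--Cartan equation \eqref{eq:gmu}, the claim is equivalent to the cleaner identity
\begin{equation}\label{pr:clean}
V_j \,=\, g^{\bar\mu}\bigl(\bar A_j + \lambda \H_+\bigr).
\end{equation}
Indeed, $\gamma_j \H_+ = \bar A_j + \lambda \H_+$ because $\gamma_j(\lambda) = \pi_{\bar A_j} + \lambda \pi_{\bar A_j}^\perp$, whence $g^{\bar\mu}(\bar A_j + \lambda \H_+) = g^{\bar\mu}\gamma_j\H_+ = \gamma_j g^{\bar\mu_j}\H_+ = \gamma_j \Phi^{\bar\mu_j}\H_+$. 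Taking $j = k-1$ (so $\gamma_{k-1} = I$ and $\bar\mu_{k-1} = \bar\mu$) yields $V_{k-1} = \Phi^{\bar\mu}\H_+$; the last assertion then follows because $\Psi$ and $\Phi^{\bar\mu}$ are both unitary generators of $V_{k-1}$ normalised to the identity at the basepoint, and $\Psi$ is determined by $V_{k-1}$ up to right multiplication by a constant unitary.

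To prove \eqref{pr:clean}, first I would identify $g^{\bar\mu} = \Gamma_\tau^{-1}(g^\mu)$. The $\tau$-twisting $\xi_i \in \mathfrak g^{i\bmod k}$ gives $\tau(\mu(\lambda)) = \mu(\omega\lambda)$, so applying $\tau$ to \eqref{eq:gmu} shows that $\tau(g^\mu(\lambda,\cdot))$ and $g^\mu(\omega\lambda,\cdot)$ solve the same initial-value problem; hence $g^\mu$ is $\tau$-twisted and $\Gamma_\tau^{-1}(g^\mu)$ is well-defined. Because $s(\lambda^{1/k})$ is independent of $z$, a direct computation gives $(\Gamma_\tau^{-1}(g^\mu))^{-1}\,d\,\Gamma_\tau^{-1}(g^\mu) = \bar\mu$ and $\Gamma_\tau^{-1}(g^\mu)(\lambda,0) = I$, and uniqueness then yields $\Gamma_\tau^{-1}(g^\mu) = g^{\bar\mu}$.

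The second step verifies \eqref{pr:clean} directly from $W = sg^\mu\H_+$. Characterise $W_j = \{f\in W : f(\omega\lambda) = \omega^j f(\lambda)\}$ by setting $f = sg^\mu h$ with $h\in \H_+$; the identities $s(\omega\lambda) = s(\omega)s(\lambda)$ and $g^\mu(\omega\lambda,\cdot) = s(\omega)^{-1}g^\mu(\lambda,\cdot)s(\omega)$ reduce the membership condition to $h(\omega\lambda) = \omega^j s(\omega)^{-1}h(\lambda)$, which forces the Fourier coefficients $h_n$ to lie in $A_{(j-n)\bmod k}$. Regrouping indices by residue modulo $k$ then shows that $s(\lambda)h(\lambda) = \lambda^j \tilde F(\lambda^k)$ with $\tilde F$ ranging over precisely $\bar A_j + \lambda\H_+$ as $h$ varies. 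Using the identity $s(\lambda)g^\mu(\lambda)s(\lambda^{-1}) = \Gamma_\tau^{-1}(g^\mu)(\lambda^k)$, a typical element of $W_j$ becomes $f = \lambda^j\,\Gamma_\tau^{-1}(g^\mu)(\lambda^k)\,\tilde F(\lambda^k)$, and comparing with $W_j = \lambda^j\{\tilde f(\lambda^k) : \tilde f \in V_j\}$ from Proposition \ref{shift-inv}(ii) gives \eqref{pr:clean}. The main obstacle is precisely this regrouping: one must treat separately the two regimes $r \le j$ and $r > j$ in the decomposition $n = qk+r$ and check that they assemble into a $\bar A_j$-valued constant term followed by an arbitrary element of $\lambda\H_+$; the orthogonality of the projections $\pi_{A_i}$ keeps the bookkeeping manageable.
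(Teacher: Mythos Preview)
Your approach is essentially the paper's: both establish $V_j = g^{\bar\mu}(\bar A_j + \lambda\H_+)$ via the identity $g^{\bar\mu}(\lambda^k) = s(\lambda)g^\mu(\lambda)s(\lambda)^{-1}$, then rewrite through $\gamma_j$. The only cosmetic difference is that the paper computes $W_j$ using the averaging description in Proposition~\ref{shift-inv}(i) (summing $\omega^{-lj}f(\omega^l\lambda)$ over $l$ and collapsing the sum), whereas you impose the eigencondition $f(\omega\lambda)=\omega^j f(\lambda)$ directly on Fourier coefficients; the two computations are equivalent and arrive at the same regrouping.

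There is, however, one step the paper carries out that you omit: verifying that $\bar\mu$ and each $\bar\mu_j$ actually lie in $\Lambda_{-1,\infty}$. Without this the symbol $\Phi^{\bar\mu_j}$, which is defined via the DPW procedure for holomorphic \emph{potentials}, has no a~priori meaning, and your opening clause ``Since $b^\mu, b^{\bar\mu_j}\in\Lambda^+\GL(n,\C)$\ldots'' already presupposes it. The paper treats this explicitly: writing $i=a_i+m_ik$ and splitting $\xi_i=\xi_i^++\xi_i^-$ along $\mathfrak g^{a_i}=\mathfrak g_{a_i}\oplus\mathfrak g_{a_i-k}$, it shows that conjugation by $s(\lambda^{1/k})$ produces only powers $\lambda^{m}$ with $m\ge -1$ (so $\bar\mu\in\Lambda_{-1,\infty}$), and then checks that the $\lambda^{-2}$-coefficient of $\gamma_j^{-1}\bar\mu\gamma_j$, namely $\pi_{\bar A_j}^\perp\xi_{-1}^+\pi_{\bar A_j}$, vanishes because $\xi_{-1}^+\in\mathrm{Hom}(A_{k-1},A_0)\subset\mathrm{Hom}(\bar A_j^\perp,\bar A_j)$. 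This is short but not automatic, and your argument needs it.
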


							\begin{proof}
							In a local coordinate write $\mu =  \xi dz$ where $\xi=\sum_{i\geq -1} \xi_i \lambda^{i}$. For each $i$, we can write uniquely $i=a_i+m_ik$, with $a_i\in\{0,1,\ldots,k-1\}$ and $m_i\in\mathbb{Z}$. If $a_i\neq 0$,
								we can decompose $\xi_i=\xi_i^++\xi_i^-$ accordingly to the decomposition $\mathfrak{g}^{a_i}=\mathfrak{g}_{a_i}\oplus \mathfrak{g}_{a_i-k}$,
								where
								$$\mathfrak{g}_{a_i}=\bigoplus_{j=a_i}^{k-1} \mathrm{Hom}(A_j,A_{j-a_i}),\quad \mathfrak{g}_{a_i-k}=\bigoplus_{j=0}^{a_{i-1}}  \mathrm{Hom}(A_j,A_{j+k-a_i}).$$
								
								The automorphism $\sigma(\lambda)=\mathrm{Ad}_{s(\lambda)}$ acts as $\lambda^{-a_i}$ on $\mathfrak{g}_{a_i}$ and as $\lambda^{k-a_i}$ on $\mathfrak{g}_{a_i-k}$. Hence,
								$$s(\lambda)\xi(\lambda) s^{-1}(\lambda)=\sum_{ i\neq m_ik}(\lambda^{m_ik}\xi_i^++\lambda^{(1+m_i)k}\xi_i^-)+\sum_{ i= m_ik}\lambda^{m_ik}\xi_{m_ik}.$$
								Since $m_i\geq -1$ (the equality holds if and only if $i=-1$), we see that $\bar \mu$ as defined above is well defined and takes values in $\Lambda_{-1,\infty}$. The bottom term of $\bar\mu$ is given by
								$\xi_{-1}^+$.
								
								We also have
\begin{equation}\label{gmubar} g^{\bar\mu}(\lambda)=s(\lambda^{1/k})g^\mu(\lambda^{1/k}) s({\lambda^{-1/k}}).\end{equation}
								
Let $f(\lambda)\in W_j$. Taking Proposition \ref{shift-inv}(i) and equation \eqref{Wpotential} into account, we see that, for some $h\in\mathcal{H}_+$,
								\begin{align}
								\nonumber f(\lambda)&=\sum_{l=0}^{k-1}\omega^{-lj}s(\lambda\omega^l)g^\mu(\omega^l\lambda)h(\omega^l\lambda)\\
								\nonumber  &= \sum_{l=0}^{k-1}\omega^{-lj}s(\lambda)s(\omega^l)g^\mu(\omega^l\lambda)s(\omega^{-l})s(\lambda^{-1})s(\lambda)s(\omega^l)h(\omega^l\lambda)\\
								&=g^{\bar{\mu}}(\lambda^k) \sum_{i=0}^{k-1}\omega^{-lj}s(\lambda)s(\omega^l)h(\omega^l\lambda).\label{last}
								\end{align}
								For the last equality we have used \eqref{gmubar} and the fact that $g^\mu$ is $\tau$-twisted, which implies that $s(\omega^l)g^\mu(\omega^l\lambda)s(\omega^{-l})=g^\mu(\lambda)$.
								Now, writing $\pi_{A_i} h(\lambda)=\sum_{r\geq 0} h_{ir}\lambda^r$, we have
								\begin{align*}
						\nonumber\sum_{l=0}^{k-1}\omega^{-lj}s(\lambda)s(\omega^l)h(\omega^l\lambda)&=\lambda^{j}\sum_{i,l=0}^{k-1}\omega^{l(i-j)}\lambda^{i-j}\pi_{A_i}h(\omega^l\lambda)\\
								&=\lambda^{j}\sum_{r\geq 0} \sum_{i=0}^{k-1}\lambda^{i-j+r}h_{ir}\sum_{l=0}^{k-1}\omega^{l(i-j+r)}. \label{last1}
								\end{align*}
								Since $\sum_{l=0}^{k-1}\omega^{l(i-j+r)}$ equals $k$ if $i-j+r$ is a multiple of $k$ and $0$ otherwise, we see that
								\begin{equation}\label{last2}
								\sum_{l=0}^{k-1}\omega^{-lj}s(\lambda)s(\omega^l)h(\omega^l\lambda)=\lambda^j(\pi_{\bar A_j}+\lambda^k\pi_{\bar A_j}^\perp) \tilde h(\lambda^k)
								\end{equation}
								for some $\tilde h\in \mathcal{H}_+$. Hence, from \eqref{last} and  \eqref{last2}, we see that any $f(\lambda)\in W_j$ can be written as
								$$f(\lambda)=\lambda^j g^{\bar{\mu}}(\lambda^k)(\pi_{\bar A_j}+\lambda^k\pi_{\bar A_j}^\perp) \tilde h(\lambda^k)$$
								for some $\tilde h\in \mathcal{H}_+$. According to the definition of $V_j$, this means that
								$$V_j= g^{\bar{\mu}}(\lambda)(\pi_{\bar A_j}+\lambda\pi_{\bar A_j}^\perp)\mathcal{H}_+.$$
								
								Finally, observe that $\gamma_j^{-1}\bar{\mu}\gamma_j$ takes values in  $\Lambda_{-1,\infty}$. In fact, the $\lambda^{-2}$-Fourier coefficient of $\gamma_j^{-1}\bar{\mu}\gamma_j$
								is $\pi_{\bar A_j}^\perp  \xi_{-1}^+\pi_{\bar A_j}$, which is zero since
								$$\xi_{-1}^+\in \mathfrak{g}_{k-1}=\mathrm{Hom}(A_{k-1},A_{0}).$$
								Hence, $V_j= \gamma_j g^{\gamma_j^{-1}\bar{\mu}\gamma_j}\mathcal{H}_+.$
								\end{proof}							
								
						Assume now that $M$ is an open subset of $\C$ and consider  the  class of holomorphic potentials $\mu=\xi dz$ with $\xi\in \Lambda_{-1,\infty}$ constant.
In this case, $g^\mu=\exp(\xi z)$. If additionally $\xi$ has a finite Fourier expansion, then the corresponding harmonic map is said to be of \emph{finite type}. The harmonic maps of finite type can also be obtained by using integrable systems methods from a certain Lax-type equation \cite{BurstallPedit,guest-book} and they play an important role in the theory of harmonic maps from tori into symmetric spaces. For example, it is known (see \cite{pacheco-tori} and references therein) that all non-constant harmonic tori in the $n$-dimensional Euclidean sphere $S^n$ or the complex projective space $\CP^n$ are either of finite type or of finite uniton number.
The following is a direct consequence of Theorem \ref{holomorphic-potentials}.

\begin{corollary}
\begin{enumerate}
\item[(i)]  $W$ corresponds to a constant potential if and only if  each $V_j$ corresponds to a constant potential.
\item[(ii)] $W$ is of finite type if and only if each $V_j$ is of finite type.
\end{enumerate}
\end{corollary}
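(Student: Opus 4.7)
The plan is to deduce the corollary directly from the explicit formulas in Theorem \ref{holomorphic-potentials}, which tie the potential $\mu$ for $W$ to the potentials $\bar\mu_j$ for the $V_j$ via two operations that are each straightforward to analyze. There is essentially no analytic content beyond bookkeeping; the structure of the argument is to check that each of these two operations, together with its inverse, preserves both ``constant in $z$'' and ``finite Fourier expansion in $\lambda$''.

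First I would examine the operation $\mu \mapsto \bar\mu$ defined by \eqref{mu:potential}. Since this formula touches only the loop parameter, constancy in the surface coordinate $z$ is obviously preserved. For the finite type property, I would reuse the computation performed inside the proof of Theorem \ref{holomorphic-potentials}: if $\xi$ is $\tau$-twisted and a finite Laurent polynomial in $\lambda$, then the displayed expansion for $s(\lambda)\xi(\lambda)s(\lambda)^{-1}$ shows it is a finite Laurent polynomial in $\lambda^k$, and therefore $\bar\xi(\lambda) = s(\lambda^{1/k})\xi(\lambda^{1/k})s(\lambda^{-1/k})$ is a finite Laurent polynomial in $\lambda$. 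The map is invertible by $\mu(\lambda) = s(\lambda)^{-1}\bar\mu(\lambda^k)s(\lambda)$, and the same two arguments, run backwards, show the inverse also preserves both properties (note that the $\tau$-twisted condition is automatic, as $\mu$ reconstructed this way is $\tau$-twisted by construction).

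Next I would analyze the operation $\bar\mu \mapsto \bar\mu_j = \gamma_j^{-1}\bar\mu\,\gamma_j$. The loop $\gamma_j(\lambda) = \pi_{\bar A_j} + \lambda\,\pi_{\bar A_j}^\perp$ is constant in $z$ and is polynomial in $\lambda$ of degree one, with inverse $\gamma_j(\lambda)^{-1} = \pi_{\bar A_j} + \lambda^{-1}\pi_{\bar A_j}^\perp$ a Laurent polynomial. Conjugation by such a loop therefore preserves $z$-independence and preserves the property of being a finite Laurent polynomial in $\lambda$, in both directions.

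Combining the two steps, the assignment $\mu \leftrightarrow \bar\mu_j$ provided by Theorem \ref{holomorphic-potentials} is a bijection on the space of potentials that preserves constancy in $z$ and preserves finiteness of the Fourier expansion in $\lambda$, for every $j \in \{0,\ldots,k-1\}$. Both statements of the corollary follow: if $W$ comes from a constant (respectively finite type) $\tau$-twisted potential $\mu$, then each $V_j$ inherits one via $\bar\mu_j$; conversely, the existence of a constant (respectively finite type) potential for any single $V_j$ forces $\bar\mu$, and hence $\mu$, to have the same property. The only point that requires a moment's care is the interpretation of ``corresponds to'' in the presence of the usual gauge freedom in the DPW method, but this is handled simply by taking the canonical representatives produced by Theorem \ref{holomorphic-potentials}, so no genuine obstacle arises.
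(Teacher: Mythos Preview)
Your proposal is correct and follows exactly the approach the paper intends: the paper simply states the corollary is ``a direct consequence of Theorem \ref{holomorphic-potentials}'' without further detail, and your argument spells out precisely why the two operations $\mu\mapsto\bar\mu$ and $\bar\mu\mapsto\gamma_j^{-1}\bar\mu\gamma_j$ (and their inverses) preserve both $z$-constancy and finiteness of the Fourier expansion in $\lambda$. Your remark about gauge freedom is apt --- the corollary should indeed be read with respect to the specific potentials furnished by Theorem \ref{holomorphic-potentials}, which is the natural reading in context.
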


\begin{example} \label{ex:F111}
\rm Consider the harmonic map $\varphi:\C \to \CP^{2}$ defined in homogeneous coordinates by
	$\varphi = [F]$ where $F=(F_0,F_1,F_2):\C \to \C^3$ is given by
	$F_i(z) = (1/\sqrt{3})\,\eu^{\omega^i z - \ov{\omega}^i \ov{z}}$
	with $\omega = \eu^{2\pi\ii/3}$.
	
This is the Clifford solution discussed in \cite{jensen-liao}, see \cite[Example 4.14]{aleman-pacheco-wood1}.
A simple calculation shows that the first and second $\pa'$-Gauss bundles of $\varphi$ are given by
$G^{(1)}(\varphi) = [F^{(1)}]$ and $G^{(2)}(\varphi) = [F^{(2)}]$, respectively, where $F^{(j)}$ stands for the $j$th derivative of $F$ with respect to $z$. Moreover, $G^{(3)}(\varphi)=\varphi$.

Let $\mathbf{u}_0,\mathbf{u}_1,\mathbf{u}_2$ be the canonical basis of\/ $\C^3$. For each $j=0,1,2$, let $A_j$ be the one-dimensional complex subspace spanned by $\mathbf{u}_j$.
Consider the $3$-symmetric space $F_{1,1,1}$ with base point $x_0=(A_0,A_1,A_2)$, $s \in \Omega\U(n)$ and canonical automorphism $\tau$, as defined in \S  \ref{primitivesection}.
Let $g(z)$ be the $3 \times 3$ matrix whose $(j+1)$st column is
$F^{(j)}(z)$; this defines a lift $g:\C \to \U(3)$ for $\varphi$, that is, $\varphi=[g\mathbf{u}_0]$. Moreover,
by a direct calculation we see that $A^g_z\ ( = \frac12 g^{-1}g_z)$ is the constant normal matrix $A$ whose only non-zero entries are
$a_{ij} = 1/2$ when $i-j = 1 \mod 3$. Hence $A^g_z$ lies in the eigenspace $\mathfrak{g}^{-1}$ (see \eqref{gis}) of $\tau$, which means that the map $\phi:\C\to F_{1,1,1}$ given by
$$\phi=gx_0=(\varphi, G^{(1)}(\varphi),G^{(2)}(\varphi)) $$ is a primitive harmonic map
associated to the potential $\mu=\lambda^{-1}A dz$.  The map $g^{\mu}$ satisfying \eqref{eq:gmu} is given by  $g^\mu(z)=\exp\big(\lambda^{-1}zA\big)$ and the corresponding extended solution is the \emph{vacuum solution} (as in \cite[\S 4.2]{aleman-pacheco-wood1}) given by
$$\Phi^\mu(\lambda,z)=\exp \big(z(\lambda^{-1}-1)A-\bar{z}(\lambda-1)A^*\big).$$  We  recall from \S  \ref{primitivesection}  that by evaluating  $\Phi:=s\Phi^\mu$ at $\lambda=\omega$ we obtain the Cartan embedding of the primitive harmonic map $g(0)^{-1}\phi:\C\to F_{1,1,1}$, and
\begin{equation}\label{eq:lift}
g(0)^{-1}\phi(z)=\exp\big(zA-\bar{z}A^*\big)x_0.
\end{equation}

The constant holomorphic potentials  $\bar{\mu}_j$ of Theorem \ref{holomorphic-potentials},  associated to the extended solutions $V_j  = \gamma_j \exp(z \xi_j)\H_+$, with $j=0,1,2$, are then given by $\bar{\mu}_j= \xi_j dz$ where
\begin{equation*}
 \xi_0=\tfrac12\!
             \begin{pmatrix}
               0 & 0 & 1  \\
               \lambda^{-1} & 0 & 0 \\
               0 & 1 & 0
             \end{pmatrix}
           ,\ \xi_1=\tfrac12\!
             \begin{pmatrix}
               0 & 0 & 1  \\
               1 & 0 & 0 \\
               0 & \lambda^{-1} & 0
             \end{pmatrix}
           ,\ \xi_{2}=\tfrac12\!
             \begin{pmatrix}
               0 & 0 & \lambda^{-1}  \\
               1 & 0 & 0 \\
               0 & 1 & 0
             \end{pmatrix};
             \end{equation*}
 note that $\xi_j(1) = A$. In particular, with the notations of Theorem \ref{diff-condition} and
Theorem \ref{holomorphic-potentials},  we can find the Iwasawa decomposition \eqref{Iwasawa}
$g^{\bar\mu} = \Phi^{\bar\mu}b^{\bar\mu}$ with extended solution
\begin{equation}\label{Phi_barmu}
\Psi(\lambda,z)=\Phi^{\bar\mu}(\lambda,z)=\exp\big(z\xi_2-\bar{z}\xi_2^{\,*}\big)\exp\big(-zA+\bar{z}A^*\big),
\end{equation}
where $\bar\mu=\bar\mu_2$. Consider the corresponding harmonic map $\psi=\Psi(-1,\cdot):\C\to \U(3)$. {}From \eqref{Phi_barmu},  we compute $A_z^\psi=\frac1{2}\psi^{-1}\partial_z\psi$\,:
\begin{equation}\label{eq:Apsi}
A_z^\psi=\exp\big(zA-\bar{z}A^*\big)\begin{pmatrix}
               0 & 0 & 1  \\
               0 & 0 & 0 \\
               0 & 0 & 0
             \end{pmatrix}\exp\big(-zA+\bar{z}A^*\big).
             \end{equation}
On the other hand, the smooth subbundles $\alpha_0\subseteq \alpha_1$ of the trivial bundle $\C\times \C^3$, as defined in Theorem \ref{diff-condition}, are necessarily given by
$\alpha_0=\image A_z^\psi$ and $\alpha_1=\ker A_z^\psi$. Hence, in view of \eqref{eq:lift}  and \eqref{eq:Apsi}, we have
$$\alpha_0=g(0)^{-1}\varphi,\quad \alpha_1=g(0)^{-1}\big(\varphi\oplus G^{(1)}(\varphi)\big).$$

 In order to find the holomorphic potential $\tilde{\mu}=\tilde{\xi}dz$ of the  Clifford solution $\varphi:\C\to \C P^2$, we can either  (i)
consider the type decomposition $\alpha=\alpha'+\alpha''$ of $\alpha=g^{-1}dg$, write  $\alpha'=\alpha'_{-1}+\alpha'_{0}$ accordingly to the decomposition of $\gl(n,\C)$ induced by the structure of $2$-symmetric space of $\C P^2$, as in \S \ref{primitivesection}, and take $\tilde{\mu}=\lambda^{-1}\alpha'_{-1}+\alpha'_0$,
 or (ii), in view of    Remark \ref{remarks-ksymmetry}(f) and Remark \ref{w-1}(b), with $l=2$ and $j_0=0$, we can start with the potential $\bar{\mu}_2=\frac12\xi_2dz$ associated to $\psi$ and reverse \eqref{mu:potential}. This gives
$$
\tilde{\xi}=\gamma_{0}(\lambda)^{-1}\xi_2(\lambda^2)\gamma_{0}(\lambda)=
		\tfrac12\!\begin{pmatrix}
               0 & 0 & \lambda^{-1}  \\
               \lambda^{-1} & 0 & 0 \\
               0 & 1 & 0
             \end{pmatrix}.
$$
\end{example}

\end{document}